\def\version{01/06/2017 \quad version 5
\hfill\href{http://arxiv.org/abs/1405.3695}{arXiv:1405.3695}
}
\def\PO{\text{\pigpenfont R}}
\renewcommand{\thefootnote}{\fnsymbol{footnote}}
\long\def\symbolfootnote[#1]#2{\begingroup%
\def\thefootnote{\fnsymbol{footnote}}\footnote[#1]{#2}\endgroup}
\newtheorem{thm}{Theorem}[section]
\newtheorem{lem}[thm]{Lemma}
\newtheorem{prop}[thm]{Proposition}
\newtheorem{cor}[thm]{Corollary}
\theoremstyle{definition}
\newtheorem{defn}[thm]{Definition}
\newtheorem{conj}[thm]{Conjecture}
\numberwithin{equation}{section}
\numberwithin{figure}{section}
\def\ds{\displaystyle}
\def\:{\colon}
\def\.{\cdot}
\def\<{\left\langle}
\def\>{\right\rangle}
\def\({\left(}
\def\){\right)}
\def\ph#1{\phantom{#1}}
\def\epsilon{\varepsilon}
\def\phi{\varphi}
\def\subset{\subseteq}
\def\leq{\leqslant}
\def\geq{\geqslant}
\def\bar#1{\overline{#1}}
\def\tilde#1{\widetilde{#1}}
\def\iso{\cong}
\DeclareMathOperator{\Char}{char}
\DeclareMathOperator{\im}{im}
\def\CP{\mathbb{C}\mathrm{P}}
\def\F{\mathbb{F}}
\def\k{\Bbbk}
\def\Z{\mathbb{Z}}
\DeclareMathOperator{\TAQ}{TAQ}
\DeclareMathOperator{\exc}{exc}
\def\undervee#1{\overset{#1}\vee}
\DeclareMathOperator{\Sq}{Sq}
\DeclareMathOperator{\dlQ}{Q}
\def\Einfty{$\mathcal{E}_\infty$ }
\title[Characteristics for \Einfty  ring spectra]
{Characteristics for \boldmath\Einfty ring spectra}
\author{Andrew Baker}
\date{\version}
\address{
School of Mathematics \& Statistics, University of Glasgow,
Glasgow G12 8QW, Scotland.}
\email{a.baker@maths.gla.ac.uk}
\urladdr{http://www.maths.gla.ac.uk/$\sim$ajb}
\thanks{The mathematics described in this paper is partly
based upon work supported by the National Science Foundation
under Grant No.~0932078~000 while the author was in residence
at the Mathematical Sciences Research Institute in Berkeley
California, during the Spring 2014 semester.  \\
The author would like to thank the following for helpful
comments: Peter Eccles, Paul Goerss, Rolf Hoyer, Niko
Naumann, Nigel Ray, Birgit Richter, Vesna Stojanoska,
John Rognes, Markus Szymik, Grant Walker and Zhouli Xu. \\
\emph{This paper will appear in the proceedings of the
Alpine Algebraic and Applied Topology Conference}}
\keywords{stable homotopy theory, \Einfty ring spectrum,
commutative $S$-algebra, cell algebra, power operations}
\subjclass[2010]{Primary 55P43; Secondary 55P42, 55P48}
\begin{document}

\begin{abstract}
We introduce a notion of characteristic for connective
$p$-local \Einfty  ring spectra and study some basic
properties. Apart from examples already pointed out by
Markus Szymik, we investigate some examples built from
Hopf invariant~$1$ elements in the stable homotopy
groups of spheres and make a series of conjectures about
spectra for which they may be characteristics; these
appear to involve hard questions in stable homotopy
theory.
\end{abstract}

\maketitle

\section*{Introduction}

In ordinary ring theory, the characteristic of a unital
ring is really part of the structure, although often not
introduced in elementary courses except in the context
of fields. Less standard is a generalisation of the notion
to algebras over a commutative ring and we discuss this
in Section~\ref{sec:Algebra}. The main aim of this note
is introduce an appropriate notion of characteristic for
derived commutative rings, at least in the topological
context of commutative $S$-algebras (also known as
\Einfty  ring spectra). Our approach could be extended
to other versions of derived commutative rings such as
simplicial commutative algebras or $\mathcal{E}_\infty$-algebras
over a fixed commutative ring, but we focus on the
topological version.

In~\cites{MS:Char,MS:ChromChar}, Markus Szymik introduced
a notion of characteristic for a commutative $S$-algebra.
We consider what properties a more general notion of
characteristic might be expected to have in this setting,
at least for connective algebras localised at a prime.
We do not attempt to work in the chromatic setting since
we rely on the theory of minimal atomic commutative
$S$-algebras which does not seem to extend to such an
intrinsically non-connective context.

As well as setting up a general notion of characteristic,
we discuss possible candidates for characteristics of
some important standard examples, and state conjectures
which appear to involve non-trivial questions in stable
homotopy theory. One possible approach to proving these
might involve old work of Joel Cohen~\cite{JMC:Decomp},
however, to date we have been unable to carry out such
a programme. Further related ideas are explored
in~\cite{EinftyHopfinvt1}.

We will assume the reader is familiar with the basic
theory of \Einfty  ring spectra in their avatar as
commutative $S$-algebras~\cite{EKMM}, discussions of
cellular aspects can be found in~\cites{BGRtaq,BP-Einfinity,TAQI}.

\tableofcontents

\section{Motivation: Characteristics in Algebra}
\label{sec:Algebra}

If $R$ is a (not necessarily commutative) ring with
unity $1\neq0$, there is a ring homomorphism
$\eta_R\:\Z\to R$ called the \emph{unit} or
\emph{characteristic} homomorphism, defined by
\[
\eta_R(n) = n1 =
\begin{cases}
\ph{-(}\underset{n}{\underbrace{1+\cdots+1}}&\text{if $n>0$}, \\
-(\underset{-n}{\underbrace{1+\cdots+1}})&\text{if $n<0$}, \\
\ph{1+\cdots1}0&\text{if $n=0$}.
\end{cases}
\]
Since $1\in R$ is non-zero, $\ker\eta_R$ is a proper
ideal of $\Z$ and there is a quotient monomorphism
$\bar\eta_R\:\Z/\ker\eta_R\to R$ which allows us to
identify the quotient ring $\Z/\ker\eta_R$ with image
$\eta_R\Z\subset R$, the \emph{characteristic subring}
of~$R$. Thus there is a unique non-negative integer
$\Char R\geq0$ such that $\ker\eta_R=(\Char R)\lhd\Z$,
and this is called the \emph{characteristic} of~$R$.

We can generalise this to unital $\k$-algebras over
a commutative ring~$\k$. Here $\k$-algebra is used
in its most general sense: a \emph{$\k$-algebra} $A$
is a unital ring equipped with a unital homomorphism
$\eta_A\:\k\to A$ whose image is central. The ideal
$\ker\eta_A\lhd\k$ is not necessarily principal, and
the quotient homomorphism $\bar\eta_A\:\k/\ker\eta_A\to A$
defines what might reasonably be called the
\emph{characteristic subalgebra} of~$A$. This
construction is functorial with respect to $\k$-algebra
homomorphisms, i.e., given an algebra homomorphism
$\phi\:A\to B$, there is a commutative diagram
\[
\xymatrix{
\k/\ker\eta_A\ar[r]^{\phi_0}\ar[d]_{\bar\eta_A} & \k/\ker\eta_B\ar[d]^{\bar\eta_B} \\
A\ar[r]^{\phi} & B
}
\]
and in particular, if $\phi$ is an isomorphism, so
is $\phi_0$.

The latter generalisation seems natural and we use
it as motivation for our discussion of the analogue
for commutative $S$-algebras. In that context there
do not appear to be obvious notions of ideals or
quotient objects (see~\cite{MH:Ideals} for recent
work on related questions), so care is needed in
making suitable definitions. There are some features
of a notion of characteristic in that setting which
seem desirable, in particular some kind of functoriality
and homotopy invariance. We tacitly assume that a
characteristic of a commutative $S$-algebra $R$ should
be a factorisation of its unit $\iota_R\:S\to R$ of
the form
\[
\xymatrix{
S\ar[r]_{\iota_{R_0}}\ar@/^15pt/[rr]^{\iota_{R}}
           & R_0 \ar[r]_{\bar\iota_R} & R \\
}
\]
where $\bar\iota_R$ is a morphism of commutative
$S$-algebras. Motivated by obvious functoriality
properties of characteristics for $\k$-algebras,
the following basic properties appear to be
reasonable requirements.

\medskip
\noindent
\textbf{Functoriality:}
If $f\: R\to R'$ is a morphism of commutative
$S$-algebras, then there is a diagram of commutative
$S$-algebras
\[
\xymatrix{
R_0\ar[r]^{f_0}\ar[d]_{\bar\iota_R} & R'_0\ar[d]^{\bar\iota_{R'}} \\
R\ar[r]^{f} & R'
}
\]
which is homotopy commutative.

\medskip
\noindent
\textbf{Homotopy invariance:} If $g\:R\xrightarrow{\,\sim\,}R''$
is a weak equivalence of commutative $S$-algebras then there
is a diagram of commutative $S$-algebras
 \[
\xymatrix{
R_0\ar[r]^{g_0}_\sim \ar[d]_{\bar\iota_R}
                & R''_0\ar[d]^{\bar\iota_{R''}} \\
R\ar[r]^{g}_\sim & R''
}
\]
which is homotopy commutative.

\medskip
Functoriality implies that characteristics of homotopy
equivalent commutative $S$-algebras are homotopy equivalent.

We will show that our definition of characteristic does
possess homotopy invariance, but does not appear to satisfy
functoriality in this sense, but nevertheless it does satisfy
a weaker version of this property.

\section{Background material on $S$-modules and commutative
$S$-algebras}\label{sec:Background}

We will assume the reader is familiar the framework provided
by~\cite{EKMM}, in particular we will work with the simplicial
monoidal model category of $S$-modules $\mathscr{M}_S$ and
the associated simplicial model category of commutative
$S$-algebras $\mathscr{C}_S$. Actually we will work with
the $p$-local versions of these for some prime~$p$, and
later~$S$ will denote the $p$-local sphere spectrum but
no essential differences occur in that setting. We will
write $\iota_A\:S\to A$ for the unit of a commutative
$S$-algebra, which is taken to be part of its structure.

We choose a cofibrant replacement $S^0\xrightarrow{\;\sim\;}S$
for~$S$ in model category $\mathscr{M}_S$ of $S$-modules
of~\cite{EKMM} (for example we could use the functorial
cofibrant replacement). We may consider the slice category
$S^0/\mathscr{M}_S$ of $S$-modules under~$S^0$. Every
commutative $S$-algebra $A$ admits a canonical morphism
of $S$-modules
\[
\xymatrix{
S^0\ar[r]_{\sim}\ar@/^15pt/[rr] & S\ar[r]_{\iota_A} & A \\
}
\]

\medskip
\noindent
making it an object of $S^0/\mathscr{M}_S$. This gives
rise to a functor
\[
\tilde{\mathbb{U}}\: \mathscr{C}_S \to S^0/\mathscr{M}_S
\]
which has a left adjoint
\[
\tilde{\mathbb{P}}\: S^0/\mathscr{M}_S \to \mathscr{C}_S,
\]
the reduced free functor~\cite{TAQI}*{section~5} which
gives a Quillen adjunction
\[
\xymatrix{
{\mathscr{C}_S} \ar@/_8pt/[rr]_{\tilde{\mathbb{U}}}
 && {S^0/\mathscr{M}_S} \ar@/_8pt/[ll]_{\tilde{\mathbb{P}}}
}
\]

In $S^0/\mathscr{M}_S$, coproducts are defined using
pushouts in $\mathscr{M}_S$, and we use the symbol
$\undervee{S^0}$ to indicate such coproducts. So for
$X,Y\in S^0/\mathscr{M}_S$, $X\undervee{S^0}Y$ is the
pushout of $X\leftarrow S^0\to Y$ in $\mathscr{M}_S$,
and since the reduced free algebra functor
$\tilde{\mathbb{P}}\:\mathscr{M}_S\to\mathscr{C}_S$
is a left adjoint it preserves coproducts, hence
\[
\tilde{\mathbb{P}}(X\undervee{S^0}Y) \iso
      \tilde{\mathbb{P}}X \wedge \tilde{\mathbb{P}}Y.
\]

In the $p$-local connective setting, we will use ideas
on minimal atomic $S$-modules and commutative $S$-algebras
which may be found in~\cites{Hu-Kriz-May,AJB&JPM,BGRtaq}.
In particular, the notions of nuclear CW $S$-modules and
commutative $S$-algebras will play a central r\^ole in
our work. This depends in turn on the theory of cellular
and CW objects~\cite{EKMM} in $\mathscr{M}_S$ and
$\mathscr{C}_S$. When discussing cellular constructions
we will refer to multiplicatively defined cell objects
built in $\mathscr{C}_S$ using the phrase \emph{\Einfty
cell}, and reserve \emph{cell} for objects built in
$\mathscr{M}_S$. Given a cell object $X$ in $\mathscr{M}_S$
we will write $X^{[n]}$ for the $n$-skeleton, while
for a cell object~$Y$ in $\mathscr{C}_S$ we will write
$Y^{\langle n\rangle}$. Finally, given a morphism~$f$
out of a cell object we will write $f^{[n]}$ or
$f^{\langle n\rangle}$ for the restriction to the
$n$-skeleton.

Following the Referee's suggestion, we summarise
briefly some of the key ideas about minimal atomic
$S$-modules and commutative $S$-algebras developed
in~\cites{AJB&JPM,BGRtaq}. In the following we
switch viewpoints and regard spectra as $S$-modules.

First we recall from~\cite{AJB&JPM}*{section~1}
that $p$-local connective $S$-module~$X$ with
$\pi_0(X)$ non-trivial and cyclic is \emph{minimal
atomic} if any map $f\:Y\to X$ for which
$\pi_0(f)\otimes\F_p$ and $\pi_n(f)$ ($n\geq0$)
are monomorphisms is a weak equivalence. Then~$X$
is minimal atomic if and only if the Hurewicz
homomorphism $\mathrm{h}\:\pi_*(X)\to H_*(X;\F_p)$
is trivial in all positive degrees. Notice that
this characterisation does not depend on a choice
of CW structure on~$X$; another useful way to
characterise minimal atomic spectra is in terms
of a special kind of CW structure.
Following~\cite{AJB&JPM}*{section~2}, a CW
$S$-module~$Z$ is \emph{nuclear} if $Z^{[0]}=S^0$
and for each $n\geq1$, the attaching map of
the $(n+1)$-cells
\[
f_n\:\bigvee_i S^n\to Z^{[n]}
\]
satisfies
\[
\ker\pi_n(f_n)
\subseteq p\,\pi_n\biggl(\bigvee_i S^n\biggr).
\]
Then $X$ is minimal atomic if and only if there
is a weak equivalence $f\:Z\to X$ where $Z$ is
a nuclear CW $S$-module.

Following~\cite{BGRtaq}*{section~3}, a $p$-local
connective commutative $S$-algebra $A$ with $\pi_0(A)$
non-trivial and cyclic is \emph{minimal atomic}
if given any morphism of commutative $S$-algebras
$g\:B\to A$ for which $\pi_0(g)\otimes\F_p$ and
$\pi_n(g)$ ($n\geq0$) are monomorphisms is a weak
equivalence. Then $A$ is minimal atomic if and
only if the $\TAQ$-Hurewicz homomorphism (induced
by the universal derivation $A\to\Omega_S(A)$)
$\mathrm{taq}\:\pi_*(X)\to\TAQ_*(A,S;\F_p)$ is
trivial in all positive degrees. There is also
a notion of nuclear commutative $S$-algebra
defined in terms of attaching \Einfty cells. A
CW commutative $S$-algebra $C$ is \emph{nuclear}
if $C^{\langle0\rangle}=S$ and for each $n\geq1$,
the attaching map of its $(n+1)$-dimensional
\Einfty cells is induced from a map
\[
g_n\:\bigvee_iS^n \to C^{\langle n\rangle}
\]
by passing to the induced morphism of commutative
$S$-algebras
$\mathbb{P}\biggl(\bigvee_iS^n\biggr)\to C^{\langle n\rangle}$
where
\[
\ker\pi_n(g_n) \subseteq p\,\pi_n\biggl(\bigvee_iS^n\biggr).
\]
Then $A$ is minimal atomic if and only if there
is a weak equivalence $b\:C\to X$ where $C$ is
a nuclear CW commutative $S$-algebra.

\section{Characteristics of connective $p$-local
commutative $S$-algebras}\label{sec:CharConnCommSalg,TAQI}

Let $\iota_R\:S\to R$ be such a connective commutative
$S$-algebra. We remark that if we started with~$R$ being
non-connective then we could replace it with its connective
cover in our discussion below, so we do not lose anything
by assuming connectivity. The induced ring homomorphism
$(\iota_R)_*\:\pi_0(S)\to\pi_0(R)$ could have a non-trivial
kernel, and also might not be surjective. If we focus on
$\pi_0(-)$ it might seem reasonable to define the characteristic
of $R$ to be this kernel. However, this neglects the kernel
of $(\iota_R)_*\:\pi_*(S)\to\pi_*(R)$ in positive degrees.
So another definition might be the (graded) kernel of
$(\iota_R)_*$. These definitions are closely wedded to the
algebra, and instead we propose a different approach which
makes the characteristic a commutative $S$-algebra equipped
with a morphism into~$R$.

\medskip
\noindent
\textbf{Conventions:}
For ease of notation and other simplifications, from now on
we work with connective $p$-local commutative $S$-algebras
$A$ for some fixed rational prime~$p>0$. Thus $S$ is to be
interpreted as the $p$-local sphere spectrum, and we will
assume that $\pi_0(A)$ is a cyclic $\Z_{(p)}$-module. The
use of \emph{finite-type} is always in the $p$-local context
of $p$-local cells or $\Z_{(p)}$-modules.

\medskip
Let $R$ be a connective $p$-local commutative $S$-algebra.
\begin{defn}\label{defn:Char-Conn}
A \emph{characteristic morphism} of $R$ is a morphism
of commutative $S$-algebras $j\:T\to R$ where $T$ is
a finite-type CW commutative $S$-algebra, where the
\Einfty  skeleta of~$T$ are defined inductively
using maps of the form
\[
\xymatrix{
{\ds\bigvee_i S^n} \ar[rr]\ar@/^15pt/[rrrr]^{f^n}
           && S \ar[rr]_(.45){\iota_{T^{\<n\>}}} && T^{\<n\>}
}
\]
factoring through the unit of $T^{\<n\>}$, and which satisfy
the conditions
\begin{subequations}\label{eq:Char-Conn}
\begin{align}
&\ker[f^n_*\:\pi_n(\bigvee_i S^n) \to \pi_n(T^{\<n\>})]
\subseteq p\,\pi_n(\bigvee_i S^n), \label{eq:Char-Conn1} \\
\im f^n_* &=
\im[(\iota_{T^{\<n\>}})_*\:\pi_n(S) \to \pi_n(T^{\<n\>})]
  \cap \ker[j^{\<n\>}_*\:\pi_n(T^{\<n\>}) \to \pi_n(R)].
          \label{eq:Char-Conn2}
\end{align}
\end{subequations}
The domain of any characteristic morphism is called a
\emph{characteristic} for~$R$.
\end{defn}

\noindent
\textbf{Note:} Condition~\eqref{eq:Char-Conn1} says that
the CW structure on $T$ is \emph{nuclear}, hence $T$ is
also \emph{minimal atomic}. Further properties of such
commutative $S$-algebras are discussed in~\cite{BGRtaq}*{section~3}.

Of course this definition begs the question of whether
such characteristics exist and also whether or not they
are in any sense unique. Notice also that the attaching
maps of \Einfty  cells all originate as maps into the
sphere spectrum~$S$.

\begin{lem}\label{lem:Char-ExistUnique}
Let $R$ be a connective $p$-local commutative $S$-algebra. \\
\emph{(a)} Characteristics for $R$ exist. \\
\emph{(b)} Suppose that $f\:R\to R'$ is a morphism of commutative
$S$-algebras and that $j\:T\to R$ and $j'\:T'\to R$ are characteristic
morphisms. Then there is a morphism of commutative $S$-algebras
$T\to T'$. \\
\emph{(c)} Suppose that $T_1$ and $T_2$ are two characteristics
for~$R$. Then there is a homotopy equivalence of commutative
$S$-algebras $T_1\xrightarrow{\,\simeq\,}T_2$. Therefore
characteristics are unique up to homotopy equivalence.
\end{lem}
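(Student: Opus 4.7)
\emph{Existence (a).} Build $T$ inductively over its \Einfty skeleta, starting with $T^{\<0\>}\homeq S$ and $j^{\<0\>} = \iota_R$. Given $j^{\<n\>}\:T^{\<n\>}\to R$, consider the subgroup
\[
K_n = \im\bigl[(\iota_{T^{\<n\>}})_*\:\pi_n(S)\to\pi_n(T^{\<n\>})\bigr] \cap \ker j^{\<n\>}_* \subseteq \pi_n(T^{\<n\>}).
\]
Under the finite-type hypothesis, $K_n$ is a finitely generated $\Z_{(p)}$-module. Lift a generating set which is minimal modulo $p$-divisibility back to $\pi_n(S)$; these lifts assemble into a map $f^n\:\bigvee_i S^n \to S \to T^{\<n\>}$ satisfying \eqref{eq:Char-Conn1} (by minimality of the lifts) and \eqref{eq:Char-Conn2} (by exhaustion). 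Form $T^{\<n+1\>}$ by attaching \Einfty cells along $f^n$, that is, as the pushout of $\tilde{\mathbb{P}}(\cone\bigvee_i S^n)\la\tilde{\mathbb{P}}(\bigvee_i S^n)\ra T^{\<n\>}$ in $\mathscr{C}_S$. The extension $j^{\<n+1\>}$ of $j^{\<n\>}$ exists because $j^{\<n\>}\circ f^n$ is null on $\pi_n(R)$, hence null-homotopic since $\bigvee_i S^n$ is $(n-1)$-connected. Taking the colimit produces $T$ with characteristic morphism $j\:T\to R$.

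\emph{Functoriality (b), interpreted with $j'\:T'\to R'$.} Construct $\phi\:T\to T'$ by induction on $\<n\>$-skeleta of $T$, taking $\phi^{\<0\>}$ to be the unit $S\to T'$. For the inductive step, the obstruction to extending $\phi^{\<n\>}$ over $T^{\<n+1\>}$ is the class of $\phi^{\<n\>}\circ f^n$ in $\pi_n(T')$, which equals $\iota_{T'}$ post-composed with the $\pi_n(S)$-valued lift of $f^n$. Applying $j'_*$ yields $f_*\circ(\iota_R)_*\circ(\text{lift})_* = 0$, using the characteristic property of $T$. Hence the class lies in $\im(\iota_{T'})_*\cap\ker j'_*$, which by \eqref{eq:Char-Conn2} applied to $T'$ consists exactly of elements killed when passing from $(T')^{\<n\>}$ to $(T')^{\<n+1\>}$, and so die in $T'$. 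Therefore $\phi^{\<n\>}$ extends to $\phi^{\<n+1\>}\:T^{\<n+1\>}\to T'$.

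\emph{Uniqueness (c).} Apply (b) with $R = R'$ and $f = \id_R$ to obtain commutative $S$-algebra maps $\phi\:T_1\to T_2$ and $\psi\:T_2\to T_1$ compatible with the characteristic morphisms. Both preserve units, hence induce the identity on $\pi_0$. The self-map $\psi\circ\phi\:T_1\to T_1$ of the nuclear, hence minimal atomic, commutative $S$-algebra $T_1$ induces the identity on $\pi_0$; by the minimal atomic endomorphism principle recalled in~\cite{BGRtaq}, such a self-map is a weak equivalence. Symmetrically $\phi\circ\psi$ is a weak equivalence, so $\phi$ is one.

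\emph{Main obstacle.} The most delicate point is in (a): generators of $K_n$ must be lifted not merely to $\pi_n(T^{\<n\>})$ but all the way back to $\pi_n(S)$, and then chosen minimally modulo $p\,\pi_n(S)$ so that \eqref{eq:Char-Conn1} holds. Achieving minimality and exhaustion \emph{simultaneously} requires a careful basis argument exploiting finite generation of the $p$-local stable stems. A secondary subtlety is the appeal to the minimal atomic endomorphism principle in (c); the proof ultimately rests on a skeleton-by-skeleton analysis of self-maps of nuclear CW commutative $S$-algebras as developed in~\cite{BGRtaq}.
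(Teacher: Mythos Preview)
Your proof is correct and follows essentially the same strategy as the paper's: an inductive skeletal construction for (a), a cell-by-cell extension argument for (b), and the atomic self-map principle for (c). In part (b) you are in fact more explicit than the paper, which merely asserts that the obstruction lies in $\ker j'_*$ and then concludes; you correctly observe that the obstruction also lies in $\im(\iota_{T'})_*$ and invoke condition~\eqref{eq:Char-Conn2} to see it vanishes in $\pi_n(T')$. One small imprecision: condition~\eqref{eq:Char-Conn2} is stated in $\pi_n((T')^{\langle n\rangle})$ rather than $\pi_n(T')$, so strictly you should lift your obstruction along the surjection $\pi_n((T')^{\langle n\rangle})\twoheadrightarrow\pi_n(T')$ via the unit before applying it; this is harmless since the unit factors through every skeleton.
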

\begin{proof}
We will make use of the notation in Definition~\ref{defn:Char-Conn}. \\
(a) We can build the skeleta of a nuclear CW commutative
$S$-algebra inductively making sure that conditions
of~\eqref{eq:Char-Conn} are satisfied. In detail, assuming
the $n$-skeleton $T^{\<n\>}$ as been constructed, consider
the epimorphism
% \begin{multline*}
% \pi_n(S) \supseteq
% \iota_{T^{\<n\>}}^{-1}
% \biggl(\im[(\iota_{T^{\<n\>}})_*\:\pi_n(S) \to \pi_n(T^{\<n\>})]
%   \cap \ker[j^{\<n\>}_*\:\pi_n(T^{\<n\>}) \to \pi_n(R)]\biggr) \\
% \to
% \im[(\iota_{T^{\<n\>}})_*\:\pi_n(S) \to \pi_n(T^{\<n\>})]
%   \cap \ker[j^{\<n\>}_*\:\pi_n(T^{\<n\>}) \to \pi_n(R)]
% \end{multline*}
\[
\xymatrix@!C0{
& \iota_{T^{\<n\>}}^{-1}
\biggl(\im[(\iota_{T^{\<n\>}})_*\:\pi_n(S) \to \pi_n(T^{\<n\>})]
  \cap \ker[j^{\<n\>}_*\:\pi_n(T^{\<n\>}) \to \pi_n(R)]\biggr)
  \ar@{->>}[ddd]
  \subseteq \pi_n(S) &   \\
  && \\
  && \\
& \im[(\iota_{T^{\<n\>}})_*\:\pi_n(S) \to \pi_n(T^{\<n\>})]
  \cap \ker[j^{\<n\>}_*\:\pi_n(T^{\<n\>}) \to \pi_n(R)]
  \subseteq \pi_n(T^{\<n\>}) &
}
\]
and after choosing a minimal set of generators for
the codomain, lift them to elements of the domain.
These can be used to form a suitable composition
\[
f^{n}\: \bigvee_i S^n
         \xrightarrow{\ph{\;\iota_{T^{\<n\>}}\;}} S
         \xrightarrow{\;\iota_{T^{\<n\>}}\;}T^{\<n\>}
\]
satisfying~\eqref{eq:Char-Conn}. The $(n+1)$-skeleton
$T^{\<n+1\>}$ is defined by the following pushout
diagram in $\mathscr{C}_S$,
\[
\xymatrix{
\mathbb{P}(\bigvee_i S^n)
 \ar@{}[dr]|{\PO} \ar[r]\ar[d]_{\tilde{f^n}}
          & \mathbb{P}(\bigvee_i D^{n+1})\ar[d] \\
T^{\langle n\rangle}\ar[r] & T^{\langle n+1\rangle}
}
\]
where $\tilde{f^n}$ is induced using the freeness
of the functor $\mathbb{P}=\mathbb{P}_S\:\mathscr{M}_S\to\mathscr{C}_S$.
The existing morphism $j^{\langle n\rangle}\:T^{\langle n\rangle}\to R$
extends to a morphism
$j^{\langle n+1\rangle}\:T^{\langle n+1\rangle}\to R$. \\
(b) We will inductively build compatible morphisms of
commutative $S$-algebras $g^n\:T^{\langle n\rangle}\to T'$.

Assume that for some $n\geq0$, we have a morphism of
commutative $S$-algebras $g^n\:T^{\langle n\rangle}\to T'$
making the following diagram of solid arrows commute, where
$\iota$ always denotes a suitable unit.
\begin{equation}\label{eq:Char-ExistUnique-b}
\xymatrix{
&& T^{\langle n\rangle}\ar@{.>}[dd]_(.5){g^n}\ar[rr]^{j^{\langle n\rangle}}
      &&R\ar[dd]^f \\
{\ds\bigvee_iS^n}\ar@{-->}@/^15pt/[rru]^{f^n}\ar@{-->}[r]
&S \ar[ru]^{\iota}\ar[rd]_{\iota} &&& \\
&&   T'\ar[rr]_{j'} && R'
}
\end{equation}
Note that we are not asserting the the right hand square
with dotted edge commutes, however the adjacent triangle
does. The dashed arrows represent the factorisation of
the attaching map $f^n$ of the \Einfty  $(n+1)$-cells
of~$T$ and the diagram of solid and dashed arrows commutes.
By construction, $j^{\langle n\rangle}\circ f^n$ is null
homotopic, hence so is the lower composition
\[
\bigvee_iS^n \xrightarrow{\ph{\;\iota_{T'}\;}} S
  \xrightarrow{\;\iota\;} T' \xrightarrow{\;j'\;} R'.
\]
Therefore the image of the induced group homomorphism
\[
\pi_n(\bigvee_iS^n) \to \pi_n(T')
\]
is contained in $\ker[(j')_*\:\pi_n(T')\to\pi_n(R)]$.
It follows that there is an extension of $g^n$ to a
morphism of commutative $S$-algebras
$g^{n+1}\:T^{\langle n+1\rangle}\to T'$. By induction
on~$n$ and passing to the colimit, we obtain a morphism
$g\:T\to T'$. \\
(c) We make use of the fact that nuclear complexes are
(minimal) atomic; see~\cite{AJB&JPM}*{proposition~2.3}
and~\cite{BGRtaq}*{theorem~3.4} for the multiplicative
case. Using this, it is enough to construct morphisms
of commutative $S$-algebras
\[
\xymatrix{
T_1 \ar@/^10pt/[rr]^{g_1} && T_2 \ar@/^10pt/[ll]^{g_2}
}
\]
by applying~(b) to the identity morphism $R\to R$. Since
the compositions $g_2\circ g_1$ and $g_1\circ g_2$ are
weak equivalences and therefore homotopy equivalences
by Whitehead's Theorem, therefore so is each $g_r$.
%
%Assume that for some $n\geq0$, we have a morphism of
%commutative $S$-algebras $g^n\:T_1^{\langle n\rangle}\to T_2$
%making the following diagram of solid arrows commutative,
%where $\iota$ always denotes a unit.
%\begin{equation}\label{eq:Char-ExistUnique-b}
%\xymatrix{
%&& T_1^{\langle n\rangle}\ar@{.>}[dd]^(.65){g^n}\ar[rd]^{j_1^{\langle n\rangle}} & \\
%{\ds\bigvee_iS^n}\ar@{-->}@/^15pt/[rru]^{f^n}\ar@{-->}[r]&S\ar[rr]^(.4){\iota}|\hole
%\ar[ru]^{\iota}\ar[rd]_{\iota} &&R \\
%&&T_2\ar[ru]_{j_2}&
%}
%\end{equation}
%The dashed arrows represent the factorisation of the
%attaching map $f^n$ of the $(n+1)$-dimensional
%\Einfty  cells of $T_1$ and the diagram of solid
%and dashed arrows commutes. By construction,
%$j_1^{\langle n\rangle}\circ f^n$ is null homotopic,
%and therefore so is the lower composition
%\[
%\bigvee_iS^n \xrightarrow{\ph{\;\iota_{T_2}\;}} S
%  \xrightarrow{\;\iota_{T_2}\;} T_2 \xrightarrow{\;j_2\;} R,
%\]
%and therefore the image of the induced homomorphism
%$\pi_n(\bigvee_iS^n) \to \pi_n(T_2)$ is contained
%in $\ker[(j_2)_*\:\pi_n(T_2)\to\pi_n(R)]$. It now
%follows that there is an extension of $g^n$ to a
%morphism of commutative $S$-algebras
%$g^{n+1}\:T^{\langle n+1\rangle}\to T_2$.
%
%By induction on $n$ and passing to the colimit, this
%gives a morphism $g_1\:T_1\to T_2$. A similar argument
%shows that a morphism $g_2\:T_2\to T_1$ exists.
\end{proof}

\begin{lem}\label{lem:Char-HtpyInvce}
Let $g\:R\to R'$ be a weak equivalence of connective
$p$-local commutative $S$-algebras and let $k\:T\to R'$
be a characteristic morphism. Then there is a morphism
of commutative $S$-algebras $j\:T\to R$ such that
$g\circ j\simeq k$ and $j$ is a characteristic morphism
for~$R$.
\end{lem}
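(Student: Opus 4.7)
The plan is to lift $k$ through the weak equivalence $g$ to obtain $j\:T\to R$ with $g\o j\simeq k$, and then to observe that the defining conditions of a characteristic morphism are preserved automatically because $g$ is a $\pi_*$-isomorphism. In particular, since the same $T$ is being used, I do not need to modify its cell structure; only a suitable morphism $j$ into $R$ needs to be constructed.

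For the existence of $j$, I would exploit that $T$, being assembled in Lemma~\ref{lem:Char-ExistUnique}(a) by iterated pushouts of the generating cofibrations $\mathbb{P}(\bigvee S^n)\to\mathbb{P}(\bigvee D^{n+1})$, is a cell, hence cofibrant, object of $\mathscr{C}_S$. Since $g$ is a weak equivalence between commutative $S$-algebras, the induced map $[T,R]\to[T,R']$ on homotopy classes of morphisms in $\mathscr{C}_S$ is a bijection, so there exists $j\:T\to R$ with $g\o j\simeq k$. A more hands-on alternative, in the style of Lemma~\ref{lem:Char-ExistUnique}(b), is to build $j^{\langle n\rangle}$ inductively over \Einfty  skeleta: given $j^{\langle n\rangle}$ with $g\o j^{\langle n\rangle}\simeq k^{\langle n\rangle}$, the composite $j^{\langle n\rangle}\o f^n$ is null on $\pi_n$ because $g\o j^{\langle n\rangle}\o f^n\simeq k^{\langle n\rangle}\o f^n$ is null (by hypothesis on $k$) and $g_*$ is an isomorphism, so $j^{\langle n\rangle}$ extends to $j^{\langle n+1\rangle}$.

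Next I would verify that $j$ satisfies Definition~\ref{defn:Char-Conn}. Condition~\eqref{eq:Char-Conn1} (nuclearity) involves only the intrinsic CW structure on $T$ and is therefore inherited from the characteristic morphism $k$. For condition~\eqref{eq:Char-Conn2}, the relation $g\o j\simeq k$ gives $g_*\o j^{\langle n\rangle}_* = k^{\langle n\rangle}_*$, and since $g_*\:\pi_n(R)\to\pi_n(R')$ is an isomorphism the two kernels coincide:
\[
\ker[j^{\langle n\rangle}_*\:\pi_n(T^{\langle n\rangle})\to\pi_n(R)]
 = \ker[k^{\langle n\rangle}_*\:\pi_n(T^{\langle n\rangle})\to\pi_n(R')].
\]
Intersecting with $\im(\iota_{T^{\langle n\rangle}})_*$ shows that the right-hand side of \eqref{eq:Char-Conn2} for $j$ agrees with the right-hand side for $k$, which is already $\im f^n_*$ by hypothesis.

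The main subtlety is arranging the lift as a morphism in $\mathscr{C}_S$, rather than merely in $\mathscr{M}_S$, while maintaining $g\o j\simeq k$; this is precisely what cofibrancy of the CW commutative $S$-algebra $T$ supplies (either via the abstract bijection on homotopy classes or via the skeletal obstruction argument above). Once $j$ is produced, the remaining verification is formal and relies only on $g_*$ being a $\pi_*$-isomorphism.
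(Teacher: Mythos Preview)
Your main argument is correct and a bit slicker than the paper's. The paper proves the lemma by an explicit skeletal induction using HELP in $\mathscr{C}_S$: at each stage it extends both the partial lift $j^{\langle n\rangle}$ \emph{and} the homotopy $h_n\:T^{\langle n\rangle}\otimes I\to R'$ witnessing $g\o j^{\langle n\rangle}\simeq k^{\langle n\rangle}$ to the next skeleton. You instead invoke cofibrancy of the CW commutative $S$-algebra $T$ (together with the fact that every object of $\mathscr{C}_S$ is fibrant) to obtain the bijection $g_*\:[T,R]\xrightarrow{\iso}[T,R']$ and hence a lift $j$ with $g\o j\simeq k$ in one stroke; restricting this homotopy to skeleta then gives exactly what you need to verify~\eqref{eq:Char-Conn2}. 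This is a legitimate and efficient route; the paper's HELP argument is the unpacked version of the same model-categorical fact. Your explicit check that the kernels of $j^{\langle n\rangle}_*$ and $k^{\langle n\rangle}_*$ agree is a point the paper leaves implicit.

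One caution about your ``hands-on alternative'': as written it does not close the induction. You assume $g\o j^{\langle n\rangle}\simeq k^{\langle n\rangle}$ to show that $j^{\langle n\rangle}\o f^n$ is null, and then extend to some $j^{\langle n+1\rangle}$; but an arbitrary extension need not satisfy $g\o j^{\langle n+1\rangle}\simeq k^{\langle n+1\rangle}$, so the hypothesis is lost for the next step. Producing the extension \emph{together with} the extended homotopy is exactly the content of HELP, which is why the paper invokes it. Since your abstract argument already delivers a global $j$ with $g\o j\simeq k$, this gap in the alternative does not affect your proof, but you should either drop the alternative or repair it by tracking the homotopy as the paper does.
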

\begin{proof}
We can make use of the simplicial structure of $\mathscr{C}_S$
to form cylinder objects; for an algebra $A\in\mathscr{C}_S$
the cylinder $A\otimes I$ is the domain for homotopies
between morphisms. We also require a version of Peter May's
HELP in $\mathscr{C}_S$, see~\cite{EKMM}: Given a commutative
diagram of solid arrows
\[
\xymatrix{
C^{\langle n\rangle}\ar[dd]\ar[rr]^{i_1}\ar[dr]
  &&C^{\langle n\rangle}\otimes I\ar[dd]|\hole \ar[dr]
  &&\ar[ll]_{i_0}C^{\langle n\rangle}\ar[dd] \\
&A\ar[rr]_(.4){\sim}&&B& \\
C^{\langle n+1\rangle}\ar[rr]^{i_1}\ar@{.>}[ur]
  &&C^{\langle n+1\rangle}\otimes I\ar@{.>}[ur]
  &&\ar[ll]_{i_0}C^{\langle n+1\rangle}\ar[ul]
}
\]
there is an extension to the larger commutative diagram
with dotted arrows. Here $C$ is a connective CW algebra
and $i_0,i_1\:X\to X\otimes I$ are the two morphisms
corresponding to the ends of the cylinder on~$X$; the
vertical morphisms involve inclusions of skeleta.

We apply HELP to give the inductive step in constructing
a morphism $T\to R$. Assume that we have a suitable morphism
$j^{\langle n\rangle}\:T^{\langle n\rangle}\to R$ so that
$g\circ j^{\langle n\rangle}\simeq k^{\langle n\rangle}$.
Given a homotopy $h_n\:T^{\langle n\rangle}\otimes I\to R'$
with $h_n\circ i_0=k^{\langle n\rangle}$ and
$h_n\circ i_1=g\circ j^{\langle n\rangle}$, there is a
commutative diagram of solid arrows
\[
\xymatrix{
T^{\langle n\rangle}\ar[dd]\ar[rr]^{i_1}\ar[dr]^{j^{\langle n\rangle}}
  &&T^{\langle n\rangle}\otimes I\ar[dd]|\hole \ar[dr]^{h_n}
  &&\ar[ll]_{i_0}T^{\langle n\rangle}\ar[dd]\ar[dl]_{k^{\langle n\rangle}} \\
&R\ar[rr]_(.4){\sim}^(.4)g&&R'& \\
T^{\langle n+1\rangle}\ar[rr]^{i_1}\ar@{.>}[ur]_(.65){j^{\langle n+1\rangle}}
  &&T^{\langle n+1\rangle}\otimes I\ar@{.>}[ur]_(.65){h_{n+1}}
  &&\ar[ll]_{i_0}T^{\langle n+1\rangle}\ar[ul]_{k^{\langle n+1\rangle}}
}
\]
and by HELP an extension to a larger diagram exists. The
induction is grounded in the case $n=0$ where
$T^{\langle0\rangle}=S$.
\end{proof}

Here is a summary of our results which provide substitutes
for the functoriality and homotopy invariance conditions
of Section~\ref{sec:Algebra}.

\begin{thm}\label{thm:Char}
Let $\mathscr{C}_S$ be the category of $p$-local
commutative $S$-algebras. \\
\emph{(a)} Every connective object $R\in\mathscr{C}_S$
has a characteristic $\Char R$ which is well-defined
up to homotopy equivalence in $\mathscr{C}_S$. \\
\emph{(b)} Given a morphism of commutative $S$-algebras
$R\to R'$, there is a morphism $\Char R\to\Char R'$. \\
\emph{(c)} Given a weak equivalence $g\:R\to R'$, there
are characteristic morphisms $j\:T\to R$ and $k\:T\to R'$
which fit into the following homotopy commutative diagram.
\[
\xymatrix{
& \ar[dl]_j T \ar@{}[d]|(.6)\simeq \ar[dr]^k & \\
R\ar[rr]_
g & & R'
}
\]
\end{thm}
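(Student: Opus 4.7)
The plan is to assemble Theorem~\ref{thm:Char} directly from Lemmas~\ref{lem:Char-ExistUnique} and~\ref{lem:Char-HtpyInvce}; no new construction is required, since the inductive cellular work has already been carried out in those lemmas. The theorem is really a repackaging of their conclusions into the ``functoriality-like'' and ``homotopy-invariance-like'' format motivated in Section~\ref{sec:Algebra}.

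For part~(a), I would take $\Char R$ to be the source of any characteristic morphism produced by Lemma~\ref{lem:Char-ExistUnique}(a); that the resulting commutative $S$-algebra is well-defined up to homotopy equivalence in $\mathscr{C}_S$ is precisely the content of Lemma~\ref{lem:Char-ExistUnique}(c), where minimal atomicity of nuclear CW complexes upgrades the mutual maps supplied by part~(b) of that lemma into a homotopy equivalence via Whitehead's theorem.

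For part~(b), I would first invoke (a) to pick characteristic morphisms $j\:\Char R\to R$ and $j'\:\Char R'\to R'$. Feeding the composite $f\o j\:\Char R\to R'$ together with $j'$ into Lemma~\ref{lem:Char-ExistUnique}(b) (read in the evident form where both characteristic morphisms land in the same target $R'$, as its proof actually uses) supplies the required morphism $\Char R\to\Char R'$. This realises~(b) as precisely the weakened substitute for strict functoriality flagged in Section~\ref{sec:Algebra}, since no homotopy commutative square over $f$ is produced or asserted. For part~(c), I would apply~(a) to $R'$ to obtain a characteristic morphism $k\:T\to R'$; because $g$ is a weak equivalence of connective $p$-local commutative $S$-algebras, Lemma~\ref{lem:Char-HtpyInvce} then hands back a characteristic morphism $j\:T\to R$ with $g\o j\simeq k$, which is the homotopy commutative triangle demanded by the statement.

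The main obstacle is really not mathematical, as all the hard work has been folded into the preceding two lemmas (the nuclear-skeleton induction for existence and atomicity for uniqueness in Lemma~\ref{lem:Char-ExistUnique}, and the HELP-style skeletal extension in Lemma~\ref{lem:Char-HtpyInvce}). The only residual care needed in the write-up is to read the second characteristic in Lemma~\ref{lem:Char-ExistUnique}(b) as landing in the target $R'$ rather than $R$, and to refrain from overclaiming in (b): the assignment $R\mapsto\Char R$ is not being advertised as a functor on $\mathscr{C}_S$, only as a homotopy-coherent choice whose image admits a morphism over any given $f$.
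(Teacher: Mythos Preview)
Your proposal is correct and matches the paper's approach exactly: the paper presents Theorem~\ref{thm:Char} as ``a summary of our results'' with no separate proof, treating it as an immediate repackaging of Lemmas~\ref{lem:Char-ExistUnique} and~\ref{lem:Char-HtpyInvce}, which is precisely what you do. Your observation about reading the second characteristic morphism in Lemma~\ref{lem:Char-ExistUnique}(b) as landing in $R'$ is also apt, since the paper's statement has an evident typo there while its proof uses $j'\:T'\to R'$.
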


The next result shows that our notion of characteristic
really only depends on the kernel of the induced algebraic
unit; in particular all commutative $S$-algebras with
torsion free homotopy have the same characteristics.
\begin{prop}\label{prop:char-ker=}
Suppose that $\iota_R\:S\to R$ and $\iota_{R'}\:S\to R'$
are two commutative $S$-algebras so that
$\ker(\iota_R)_*\subseteq\ker(\iota_{R'})_*\subseteq\pi_*(S)$.
Then there is a morphism of commutative $S$-algebras
$\Char R\to\Char R'$.
\end{prop}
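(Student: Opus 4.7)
The plan is to imitate the inductive construction of part~(b) of Lemma~\ref{lem:Char-ExistUnique}, after observing that the morphism $f\:R\to R'$ in that lemma is used only to transfer an attaching-map kernel condition from $R$ to~$R'$. The hypothesis $\ker(\iota_R)_*\subseteq\ker(\iota_{R'})_*$ supplies exactly this transfer directly, without needing any morphism between $R$ and~$R'$.

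Let $j\:T\to R$ and $j'\:T'\to R'$ be characteristic morphisms, so that $T=\Char R$ and $T'=\Char R'$. I build compatible morphisms $g^n\:T^{\<n\>}\to T'$ of commutative $S$-algebras inductively. The base case is forced, since $T^{\<0\>}=S$ is the initial object of $\mathscr{C}_S$ and so $g^0$ must be the unit $\iota_{T'}$. For the inductive step, assume $g^n$ has been constructed. The $(n+1)$-cells of~$T$ are attached along a map $f^n$ that factors as
\[
f^n\:\bigvee_i S^n
\xrightarrow{\;\tilde f^n\;} S
\xrightarrow{\;\iota_{T^{\<n\>}}\;} T^{\<n\>},
\]
so it suffices to produce a null-homotopy of $g^n\circ f^n$; freeness of $\mathbb{P}$ together with the pushout defining $T^{\<n+1\>}$ will then yield the desired extension~$g^{n+1}$.

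To verify the null-homotopy, note that $j\circ f^n=\iota_R\circ\tilde f^n$ vanishes on~$\pi_n$ by condition~\eqref{eq:Char-Conn2}, so the image of $\tilde f^n_*\:\pi_n(\bigvee_i S^n)\to\pi_n(S)$ lies in $\ker(\iota_R)_*$, and hence by hypothesis also in $\ker(\iota_{R'})_*$. Therefore $\iota_{R'}\circ\tilde f^n$ is null on~$\pi_n$, and hence null-homotopic as a map of $S$-modules, since the source has homotopy concentrated in degree~$n$. Because $g^n$ is a morphism of commutative $S$-algebras we have $g^n\circ\iota_{T^{\<n\>}}=\iota_{T'}$, and consequently
\[
j'\circ g^n\circ f^n
=j'\circ\iota_{T'}\circ\tilde f^n
=\iota_{R'}\circ\tilde f^n
\simeq *.
\]
Thus $(g^n\circ f^n)_*$ has image in $\ker(j')_*\cap\im(\iota_{T'})_*\subseteq\pi_n(T')$, and the remainder of the argument of Lemma~\ref{lem:Char-ExistUnique}(b) applies verbatim to extend $g^n$ over the new $(n+1)$-cells of~$T$. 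Passing to the colimit in~$n$ yields the required morphism $\Char R\to\Char R'$.

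The point to notice is therefore conceptual rather than technical: the only role of the ambient morphism $f\:R\to R'$ in Lemma~\ref{lem:Char-ExistUnique}(b) was to guarantee the containment $\ker(\iota_R)_*\subseteq\ker(\iota_{R'})_*$ needed to annihilate the attaching maps in~$R'$, and the present proposition supplies that containment by direct hypothesis.
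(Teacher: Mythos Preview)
Your proof is correct and follows the same inductive strategy as the paper's own proof: extend a morphism $T^{\langle n\rangle}\to T'$ over the $(n+1)$-skeleton by observing that the attaching maps factor through~$S$ and that the kernel hypothesis forces them to become null in~$T'$. The paper's proof is in fact much terser than yours (it simply asserts the extension step is ``straightforward''), so your version spells out the mechanism that the paper leaves implicit; in particular, your observation that the only use of the morphism $f\:R\to R'$ in Lemma~\ref{lem:Char-ExistUnique}(b) is to enforce the containment $\ker(\iota_R)_*\subseteq\ker(\iota_{R'})_*$ is exactly the point.
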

\begin{proof}
It is straightforward to show that for $T=\Char R$
and $T'=\Char R'$, given a morphism
$T^{\langle n\rangle}\to T'$ on the $n$-skeleton,
there is an extension to the $(n+1)$-skeleton,
giving a commutative diagram
\[
\xymatrix{
& \ar[dl]S\ar[dr] & \\
T^{\langle n\rangle}\ar[dr]\ar[rr]
      && T^{\langle n+1\rangle}\ar[dl] \\
     &T'&
}
\]
and by induction on $n$, it follows that a morphism
$T\to T'$ exists.
\end{proof}
\begin{cor}\label{cor:char-ker=}
Suppose that $\iota_R\:S\to R$ and $\iota_{R'}\:S\to R'$
are two commutative $S$-algebras for which
$\ker(\iota_R)_*=\ker(\iota_{R'})_*\subseteq\pi_*(S)$.
Then there is a homotopy equivalence of commutative
$S$-algebras $\Char R\xrightarrow{\;\simeq\;}\Char R'$.
\end{cor}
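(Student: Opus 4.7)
The plan is to mimic the proof of Lemma~\ref{lem:Char-ExistUnique}(c), with Proposition~\ref{prop:char-ker=} playing the role that part~(b) of that lemma played there. Since the hypothesis $\ker(\iota_R)_* = \ker(\iota_{R'})_*$ gives both inclusions $\ker(\iota_R)_* \subseteq \ker(\iota_{R'})_*$ and $\ker(\iota_{R'})_* \subseteq \ker(\iota_R)_*$, I would apply Proposition~\ref{prop:char-ker=} once in each direction to obtain morphisms of commutative $S$-algebras
\[
\xymatrix{
\Char R \ar@/^10pt/[rr]^{g_1} && \Char R' \ar@/^10pt/[ll]^{g_2}
}
\]

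Next I would form the compositions $g_2 \circ g_1 \colon \Char R \to \Char R$ and $g_1 \circ g_2 \colon \Char R' \to \Char R'$. By Definition~\ref{defn:Char-Conn} (and the note immediately following it), each characteristic is a nuclear CW commutative $S$-algebra, hence minimal atomic in the sense of~\cite{BGRtaq}*{section~3}. The key structural input is that any self-morphism of a minimal atomic commutative $S$-algebra is automatically a weak equivalence; this is precisely the property invoked in the proof of Lemma~\ref{lem:Char-ExistUnique}(c). Consequently both compositions $g_2 \circ g_1$ and $g_1 \circ g_2$ are weak equivalences, and Whitehead's theorem then promotes these to homotopy equivalences. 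A standard two-out-of-three argument applied to the homotopy classes shows that $g_1$ and $g_2$ must each individually be homotopy equivalences of commutative $S$-algebras.

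There is no real obstacle here — this is essentially a formal consequence of Proposition~\ref{prop:char-ker=} combined with the minimal atomic property. The only point that requires any care is ensuring that the symmetric hypothesis is genuinely being used (so that Proposition~\ref{prop:char-ker=} applies in both directions), and confirming that the minimal atomic property is enough to force self-maps to be weak equivalences in $\mathscr{C}_S$ rather than just in $\mathscr{M}_S$; but this is exactly what the multiplicative version cited from~\cite{BGRtaq}*{theorem~3.4} in the proof of Lemma~\ref{lem:Char-ExistUnique}(c) provides.
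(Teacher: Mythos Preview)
Your proposal is correct and follows essentially the same route as the paper's proof: apply Proposition~\ref{prop:char-ker=} in both directions to obtain morphisms each way, then use that $\Char R$ and $\Char R'$ are (minimal) atomic so that the composite self-maps are weak equivalences, hence each morphism is a homotopy equivalence. The paper's version is simply more terse, omitting the explicit appeal to Whitehead and two-out-of-three.
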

\begin{proof}
By the Proposition there are morphisms $\Char R\to\Char R'\to\Char R$
whose compositions are weak equivalences since $\Char R$
and $\Char R'$ are both (minimal) atomic.
\end{proof}

We end this discussion with an observation that relates
the notion of a characteristic to that of a nuclear
$S$-module. Let $R$ be a connective $p$-local commutative
$S$-algebra. Then as described in~\cite{AJB&JPM}, beginning
with the $0$-cell $S^0$ we can construct a nuclear CW
complex~$X$ by attaching cells only to the bottom cell,
and a map $X\to R$ which induces a monomorphism on
$\pi_*(-)$ and $\F_p\otimes\pi_0(-)$ (i.e., a core for~$R$).
There is a unique extension to a morphism of commutative
$S$-algebras $\tilde{\mathbb{P}}X\to R$.
\begin{prop}\label{prop:Char-Core}
If\/ $\tilde{\mathbb{P}}X$ is a minimal atomic commutative
$S$-algebra then the morphism $\tilde{\mathbb{P}}X\to R$
described above is a characteristic for~$R$.
\end{prop}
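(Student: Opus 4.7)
The plan is to endow $T = \tilde{\mathbb{P}}X$ with the $\mathcal{E}_\infty$-cell structure $T^{\langle n\rangle} = \tilde{\mathbb{P}}X^{[n]}$ inherited from the CW structure on $X$, and verify that the morphism $j\colon T \to R$ satisfies the conditions of Definition~\ref{defn:Char-Conn}. Because the cells of $X$ are attached only to the bottom cell $S^0$, each $X^{[n]} \hookrightarrow X^{[n+1]}$ is a pushout in $\mathscr{M}_S$ along an attaching map $\bigvee_i S^n \to S^0 \hookrightarrow X^{[n]}$. Since $\tilde{\mathbb{P}}$ is a left adjoint and preserves pushouts, the induced $\mathcal{E}_\infty$-cell attachment presents $T^{\langle n+1\rangle}$ as a pushout in $\mathscr{C}_S$ whose attaching map to $T^{\langle n\rangle}$ factors as $f^n\colon \bigvee_i S^n \to S \xrightarrow{\,\iota_{T^{\langle n\rangle}}\,} T^{\langle n\rangle}$, precisely the shape required.

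To verify~\eqref{eq:Char-Conn1}, I would use that $X$ is a nuclear CW $S$-module, so $\ker[\pi_n(\bigvee_i S^n) \to \pi_n(X^{[n]})] \subseteq p\,\pi_n(\bigvee_i S^n)$. The unit $\iota_{T^{\langle n\rangle}}$ factors through the natural map $X^{[n]}\to T^{\langle n\rangle}$ precomposed with the bottom-cell inclusion $S^0\hookrightarrow X^{[n]}$, so $\ker f^n_*$ contains the corresponding kernel computed in $\pi_n(X^{[n]})$. The hypothesis that $T$ is minimal atomic then forces the two kernels to agree: were $\ker f^n_*$ strictly larger, a redundant $\mathcal{E}_\infty$-cell could be excised and $T$ would admit a proper $\mathcal{E}_\infty$-subalgebra weakly equivalent to itself, contradicting minimal atomicity (cf.~\cite{BGRtaq}*{theorem~3.4}).

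For~\eqref{eq:Char-Conn2}, the composition $j^{\langle n\rangle} \circ \iota_{T^{\langle n\rangle}}$ agrees with $\iota_R$, so $j^{\langle n\rangle} \circ f^n$ equals $\bigvee_i S^n \to S \xrightarrow{\iota_R} R$, which is null-homotopic because the attaching classes of the core $X$ were chosen inside $\ker(\iota_R)_*$. Together with the factorisation of $f^n$ through $S$, this gives $\im f^n_* \subseteq \im(\iota_{T^{\langle n\rangle}})_* \cap \ker j^{\langle n\rangle}_*$. Conversely, for $\iota_*(\beta)$ in this intersection one has $(\iota_R)_*(\beta)=0$, so $\beta\in\ker(\iota_R)_*$; by the inductive construction of $X$ either $\beta$ is already killed in $\pi_n(X^{[n]})$, in which case $\iota_*(\beta)=0$ in $\pi_n(T^{\langle n\rangle})$, or $\beta$ is one of the attaching classes used at stage $n$, placing $\iota_*(\beta)$ into $\im f^n_*$.

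The main obstacle is the nuclearity step. \emph{A priori} the map $\pi_n(X^{[n]})\to\pi_n(T^{\langle n\rangle})$ may introduce new kernel elements arising from the free $\mathcal{E}_\infty$-structure (products and Dyer--Lashof operations on lower-degree classes), so the nuclearity of $X$ does not transfer to $T$ by a purely formal argument. The minimal atomicity of $T$ is exactly what excludes this pathology, and making the argument rigorous depends on the structural theory of minimal atomic commutative $S$-algebras from~\cite{BGRtaq}*{section~3}.
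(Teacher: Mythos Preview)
Your approach is genuinely different from the paper's, and the gap you flag at the end is real and not closed by the argument you give.

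The paper does \emph{not} attempt to verify conditions~\eqref{eq:Char-Conn1} and~\eqref{eq:Char-Conn2} for the inherited cell structure on $\tilde{\mathbb{P}}X$. Instead it takes a characteristic morphism $R_0\to R$ (which exists by Lemma~\ref{lem:Char-ExistUnique}(a)), and builds morphisms of commutative $S$-algebras in both directions between $\tilde{\mathbb{P}}X$ and $R_0$. The map $X\to R_0$ under $S^0$ exists because the attaching maps of $X$ lie in $\ker(\iota_R)_*$ and are therefore killed in $\pi_*(R_0)$; it then extends freely to $\tilde{\mathbb{P}}X\to R_0$. A symmetric cellular argument produces $R_0\to\tilde{\mathbb{P}}X$. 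Since $R_0$ is nuclear (hence minimal atomic) and $\tilde{\mathbb{P}}X$ is minimal atomic by hypothesis, both composites are weak equivalences, so each map is one. This identifies $\tilde{\mathbb{P}}X$ with the characteristic $R_0$ up to homotopy, which is all that is claimed.

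Your direct verification of~\eqref{eq:Char-Conn2} is essentially correct (though more cleanly: both sides equal the image of $\ker(\iota_R)_*\subset\pi_n(S)$ in $\pi_n(T^{\langle n\rangle})$). The trouble is~\eqref{eq:Char-Conn1}. Your argument that a failure of nuclearity would let you ``excise a redundant $\mathcal{E}_\infty$-cell'' and obtain a proper weakly equivalent subalgebra is not how minimal atomicity is formulated: minimal atomic means every self-map inducing an isomorphism on $\F_p\otimes\pi_0$ is a weak equivalence, and there is no general statement that every CW structure on a minimal atomic object is nuclear. So the hypothesis does not feed back into the specific cell structure in the way you need. The paper's indirect route avoids this entirely by never asking whether the inherited structure is nuclear.
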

\begin{proof}
Suppose that $R_0\to R$ is a characteristic morphism
for~$R$. Then it is straightforward to construct a
map $X\to R_0$ under $S^0$, and this has a unique
extension to a morphism of commutative $S$-algebras
$\tilde{\mathbb{P}}X\to R_0$. Another cellular
argument constructs a morphism of commutative $S$-algebras
$R_0\to\tilde{\mathbb{P}}X$. If $\tilde{\mathbb{P}}X$
is minimal atomic then the two composite endomorphisms
are weak equivalences and so each morphism is also a
weak equivalence.
\end{proof}
The reason for the conditional statement here is that
in general for a CW complex $S^0\to Y$, while
$\tilde{\mathbb{P}}Y$ being minimal atomic implies~$Y$
is minimal atomic, the converse need not be true. An
example for $p=2$ is provided by
$Y=\Sigma^{-2}\Sigma^\infty\CP^\infty$,
see~\cite{AB&BR:Arolla2012}. In the  examples we will
consider later, this condition will in fact be satisfied.

\section{Examples}\label{sec:Examples}

\subsection{Prime power characteristics}\label{subsec:Primepower}
We begin with a generalisation of examples discussed
in~\cite{MS:Char}. For a prime $p$ and $r\geq1$, we may
form $S/\!/p^r$ as the pushout of the solid square in
\[
\xymatrix{
& \ar[dl]_{\widetilde{p^r}} \mathbb{P}S^0
       \ar@{ >->}[r]\ar@{ >->}[d]\ar@{}[dr]|{\PO}
  & \mathbb{P}D^1\ar@{ >->}[d] \\
S & \ar@{->>}[l]^(.4){\sim}\tilde{S} \ar@{ >->}[r] & S/\!/p^r
}
\]
where $\widetilde{p^r}$ is the multiplicative extension
of a map $S^0\to S$ of degree~$p^r$ and the left-hand
triangle is the functorial cofibration/acyclic fibration
factorisation of $\widetilde{p^r}$. There is also a homotopy
equivalence of commutative $S$-algebras
\[
\widetilde{\mathbb{P}}C_{p^r} \xrightarrow{\;\simeq\;} S/\!/p^r
\]
where $C_{p^r}$ is the mapping cone of a map $S^0\to S^0$
of degree~$p^r$ viewed as an object of $S^0/\mathscr{M}_S$.
Notice that $\pi_0(S/\!/p^r)=\Z/p^r$, so this ring has
characteristic~$p^r$. Furthermore, there is a (non-unique)
morphism of commutative $S$-algebras $S/\!/p^r\to H\Z/p^r$.

When $r=1$, Steinberger's splitting result~\cite{LNM1176}*{theorem~III.4.1}
(see also~\cite{Nishida}*{section~10} for a more recent
approach) implies that the spectrum $S/\!/p^r$ splits
as a wedge of suspensions of the Eilenberg-Mac~Lane
spectrum~$H\F_p=H\Z/p$. This means that the unit
induces a ring homomorphism $\pi_*(S)\to\pi_*(S/\!/p)$
whose kernel contains~$p$ and all positive degree
elements of the domain. This shows that $S/\!/p$
is a characteristic of $H\F_p$. More generally,
we have the following.
\begin{lem}\label{lem:prime-char}
Let $R$ be a $p$-local commutative $S$-algebra
for which the ring $\pi_0(R)$ has characteristic~$p$.
Then $S/\!/p$ is a characteristic of~$R$ and $R$
is a wedge of suspensions of~$H\F_p$.
\end{lem}
\begin{proof}
The unit map $S\to R$ factors through the mapping
cone $C_p$ so there is an associated morphism of
commutative $S$-algebras $S/\!/p\to R$ which also
factors the unit. By the above discussion, the
kernel of $\pi_*(S/\!/p)\to\pi_*(R)$ contains~$p$
and all positive degree elements. Furthermore,
$\pi_*(R)$ is a graded $\F_p$-vector space and
a standard argument shows that given a basis
$b_i$ ($i\in I)$ there is a weak equivalence
\[
R \sim \bigvee_{i\in I}\Sigma^{|b_i|}H\F_p.
\qedhere
\]
\end{proof}

When $r>1$, we still have $\pi_0(S/\!/p^r)=\Z/p^r$,
but the splitting result of~\cite{LNM1176}*{theorem~III.4.2}
requires further conditions and does not imply
a splitting of $S/\!/p^r$. When~$p$ is odd,
the element $\alpha_1\in\pi_{2p-3}(S)$ survives
in $\pi_{2p-3}(S/\!/p^r)$, and we can attach
an \Einfty cell to kill its image, giving a
commutative $S$-algebra $S/\!/(p^r,\alpha_1)$
for which
\[
\pi_{2p-2}(S/\!/(p^r,\alpha_1)) = \Z_{(p)}u'_1,
\]
where the new cell gives an integral homology
class $x'_{2p-2}\in H_{2p-2}(S/\!/(p^r,\alpha_1))$
so that the Hurewicz image of $u'_1$ is $px'_{2p-2}$.
In order to obtain a commutative
$S$-algebra satisfying the condition that~$1$ is
in the image of $\beta\mathcal{P}^1_*$ acting on
$H_{2p-1}(-)$, we need to attach an \Einfty cell
of dimension $2p-1$ to kill~$u'_1$, giving
$S/\!/(p^r,\alpha_1,u'_1)$ which does split as
a wedge of suspensions of the Eilenberg-Mac~Lane
spectra $H\Z/p^s$ for $0\leq s\leq r$. It is
tempting to state the following.
\begin{conj}\label{conj:oddprime^r}
For an odd prime $p$ and $r>1$,\/ $S/\!/(p^r,\alpha_1)$
is a characteristic for $H\Z/p^r$.
\end{conj}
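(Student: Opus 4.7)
My plan is to verify Definition~\ref{defn:Char-Conn} directly for $T=S/\!/p^r,\alpha_1$ equipped with the evident $\mathcal{E}_\infty$ CW structure: an $\mathcal{E}_\infty$ $1$-cell attached via $p^r\:S^0\to S$ and an $\mathcal{E}_\infty$ $(2p-2)$-cell attached via $\alpha_1\:S^{2p-3}\to S$. A candidate characteristic morphism $j\:T\to H\Z/p^r$ is constructed in two stages: the unit $\iota\:S\to H\Z/p^r$ kills $p^r$ because $\pi_0(H\Z/p^r)=\Z/p^r$, giving an extension $S/\!/p^r\to H\Z/p^r$; then the composite $S^{2p-3}\xrightarrow{\alpha_1}S\to H\Z/p^r$ is null since $\pi_{2p-3}(H\Z/p^r)=0$, so the extension further descends to $T$.

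Next, I would verify conditions~\eqref{eq:Char-Conn1} and~\eqref{eq:Char-Conn2} at every skeletal stage. In degree~$0$, multiplication by $p^r$ is injective on $\pi_0(S)=\Z_{(p)}$, so nuclearity is automatic, and the image $p^r\Z_{(p)}$ matches $\ker[\pi_0(S)\to\Z/p^r]$. For $0<n<2p-3$ the $p$-local stable stem $\pi_n(S)$ vanishes, so both sides of~\eqref{eq:Char-Conn2} are zero and no cells need to be attached. At $n=2p-3$, the survival of $\alpha_1$ in $\pi_{2p-3}(S/\!/p^r)$ (noted just before the conjecture) lets me take $f^{2p-3}=\alpha_1$; nuclearity is immediate since $\pi_{2p-3}(S)_{(p)}=\Z/p\{\alpha_1\}$ is annihilated by~$p$.

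The hard part, and precisely what makes this a genuine conjecture, is the range $n\geq 2p-2$, where I must prove
\[
\im\bigl[(\iota_T)_*\:\pi_n(S)\to\pi_n(S/\!/p^r,\alpha_1)\bigr]=0 \quad\text{for every}\ n\geq 2p-2.
\]
My strategy leverages the wedge splitting
\[
S/\!/p^r,\alpha_1,u'_1\;\simeq\;\bigvee_i \Sigma^{n_i}H\Z/p^{s_i}
\]
recalled above the conjecture. For any such wedge, a morphism of commutative $S$-algebras $S\to\bigvee_i\Sigma^{n_i}H\Z/p^{s_i}$ is classified by a single class in $\pi_0$ and hence is concentrated on the degree-zero summands, each of whose positive homotopy is trivial; therefore the image of $\pi_n(S)$ in $\pi_n(S/\!/p^r,\alpha_1,u'_1)$ vanishes for $n>0$. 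The residual task is to lift this vanishing back across the single $\mathcal{E}_\infty$ $(2p-1)$-cell attachment $S/\!/p^r,\alpha_1\to S/\!/p^r,\alpha_1,u'_1$: a putative positive-degree class $x\in\im(\iota_T)_*$ must die in $\pi_*(S/\!/p^r,\alpha_1,u'_1)$, and I would argue it must then lie in the submodule generated by $u'_1$ together with its Dyer--Lashof descendants. Since $u'_1$ is by construction a new homotopy class created by the $\alpha_1$-attachment rather than coming from $\pi_*(S)$, that submodule is disjoint from the image of the unit of~$S$, forcing $x=0$. Making this disjointness rigorous seems to demand a genuine computation of $\pi_*(S/\!/p^r,\alpha_1)$, approachable through a bar- or topological-Andr\'e--Quillen-style spectral sequence for the $(2p-1)$-cell attachment, or through the decomposition machinery of Joel Cohen~\cite{JMC:Decomp} advertised in the introduction; harnessing either approach so as to control the higher $\alpha_i$- and $\beta_i$-families is where I expect the principal obstruction to lie.
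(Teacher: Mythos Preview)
The paper does \emph{not} prove this statement: it is stated as a conjecture and left open. So there is no ``paper's own proof'' to compare against; your proposal is an attempted resolution of an open problem, and should be assessed on its own merits.

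Your low-degree analysis is correct. The $\mathcal{E}_\infty$ CW structure on $T=S/\!/p^r,\alpha_1$ you describe satisfies conditions~\eqref{eq:Char-Conn1} and~\eqref{eq:Char-Conn2} through degree $2p-3$: at $n=0$ the kernel is zero and the image is $p^r\Z_{(p)}=\ker[(\iota_{H\Z/p^r})_*]$; for $0<n<2p-3$ the odd-primary stable stems vanish; and at $n=2p-3$ the survival of~$\alpha_1$ in $\pi_{2p-3}(S/\!/p^r)$ together with $\pi_{2p-3}(S)_{(p)}=\Z/p\{\alpha_1\}$ gives exactly what is needed. Likewise, your argument that the unit of any wedge of suspended Eilenberg--Mac~Lane spectra kills positive-degree homotopy is sound: the projection of the unit onto any summand $\Sigma^{n_i}H\Z/p^{s_i}$ with $n_i>0$ lies in $\pi_{-n_i}(H\Z/p^{s_i})=0$, and summands with $n_i=0$ have trivial positive homotopy.

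The genuine gap is your lifting step, and you are right to flag it as the crux. The claim that a class $x\in\im(\iota_T)_*$ which dies in $\pi_*(S/\!/p^r,\alpha_1,u'_1)$ must ``lie in the submodule generated by $u'_1$ together with its Dyer--Lashof descendants'' is not a well-defined statement, and even charitably interpreted it is not justified. Attaching an \Einfty $(2p-1)$-cell along $u'_1$ does far more than quotient by an ideal: the comparison of $\pi_*(T)$ and $\pi_*(T/\!/u'_1)$ is governed by a highly nontrivial filtration, and the kernel of $\pi_*(T)\to\pi_*(T/\!/u'_1)$ can contain classes that have no visible algebraic relationship to~$u'_1$ (for instance, classes arising via Toda brackets $\langle a,u'_1,b\rangle$, or classes whose Adams filtration jumps). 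Your assertion that this kernel is ``disjoint from the image of the unit of~$S$'' is precisely equivalent to the conjecture itself in degrees $\geq 2p-2$, so the argument is circular at that point. The honest closing paragraph of your proposal concedes this; what remains is not a proof but a restatement of the problem together with a plausible toolkit.
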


When $p=2$, we have a similar situation with
$\eta$ in place of $\alpha_1$. Then
$1\in H_0(S/\!/(2^r,\eta,u'_1))$ is in the
image of $\Sq^3$ acting on $H_{3}(S/\!/(2^r,\eta,u'_1))$,
so $S/\!/(2^r,\eta,u'_1)$ splits as a wedge of
suspensions of the Eilenberg-Mac~Lane spectra
$H\Z/2^s$ for $0\leq s\leq r$. Again we are
led to make a conjecture.
\begin{conj}\label{conj:2^r}
For $r>1$,\/ $S/\!/(2^r,\eta)$ is a characteristic
for $H\Z/2^r$.
\end{conj}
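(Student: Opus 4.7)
The plan is to show that the morphism $j\: S/\!/2^r,\eta \to H\Z/2^r$ obtained as the composite of the canonical quotient $S/\!/2^r,\eta \to S/\!/2^r$ with the (non-unique) morphism $S/\!/2^r \to H\Z/2^r$ discussed in the excerpt is a characteristic morphism in the sense of Definition~\ref{defn:Char-Conn}. The factorisation through $S/\!/2^r,\eta$ exists because $\eta$ maps to zero in $\pi_1(H\Z/2^r)=0$. The inherent $\mathcal{E}_\infty$ cell structure on $T = S/\!/2^r,\eta$ is: $T^{\<0\>} = S$, one $1$-cell attached along $2^r\in\pi_0(S)$, and one $2$-cell attached along $\eta\in\pi_1(S)$, with no further cells.

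The low-dimensional checks of \eqref{eq:Char-Conn} are routine. Nuclearity \eqref{eq:Char-Conn1} holds because $2^r\:S^0\to S$ is injective on $\pi_0$, and $\eta\in\pi_1(S)=\Z/2$ is primitive. Condition \eqref{eq:Char-Conn2} at $n=0$ reduces to $\ker[\Z_{(2)}\twoheadrightarrow\Z/2^r]=2^r\Z_{(2)}$, and at $n=1$ to the observation that $\im[(\iota_{S/\!/2^r})_*]$ in degree one is generated by the image of $\eta$, while $\ker j^{\<1\>}_* = \pi_1(S/\!/2^r)$ since $\pi_1(H\Z/2^r)=0$.

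The substantive step is verifying \eqref{eq:Char-Conn2} for $n\geq 2$. Since no further cells are attached and $\pi_n(H\Z/2^r)=0$, it reduces to proving
\[
\im\bigl[(\iota_T)_*\:\pi_n(S)\to\pi_n(S/\!/2^r,\eta)\bigr] = 0 \qquad (n\geq 2).
\]
Because $\iota_T$ is a morphism of commutative $S$-algebras with $(\iota_T)_*(\eta)=0$, every class in the $\eta$-ideal of $\pi_*(S)$ is killed automatically. The Hopf-related generators $\nu\in\pi_3(S)$, $\sigma\in\pi_7(S)$ and the many non-$\eta$-divisible classes in higher stems require genuine work. I would attack this by exploiting the splitting of $S/\!/2^r,\eta,u'_1$ as a wedge of suspensions of Eilenberg--Mac~Lane spectra $H\Z/2^s$ (with $0\leq s\leq r$) noted just before the conjecture in the excerpt: attaching the $\mathcal{E}_\infty$ $3$-cell along $u'_1$ is a pushout in $\mathscr{C}_S$ whose underlying cofibre sequence of $S$-modules yields a long exact sequence in homotopy. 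Tracking the unit map through this sequence, one aims to show that any image class in $\pi_{n}(S/\!/2^r,\eta)$ coming from $\pi_n(S)$ maps to zero in the split target and is therefore the boundary of the $u'_1$-cell attachment, hence must already be zero in $S/\!/2^r,\eta$.

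The main obstacle is precisely this vanishing on the positive stable stem, which is the deep calculation flagged in the introduction. Beyond the spectral-sequence approach above, an alternative programme applies Joel Cohen's decomposition machinery~\cite{JMC:Decomp} to reduce the vanishing to a controlled statement about iterated Steenrod operations on $H_*(S/\!/2^r,\eta;\F_2)$, ultimately using the power-operation structure on the $\mathcal{E}_\infty$ side. Either route is where a complete proof must confront genuinely non-trivial stable-homotopy input, and carrying it out is exactly the open problem that the excerpt indicates remains unresolved.
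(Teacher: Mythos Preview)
The paper offers no proof of this statement: it is stated as a \emph{conjecture}, and immediately afterwards the author adds ``The reader is warned that we have no direct evidence for this and the discussion of Section~\ref{subsec:HopfInvt-2} suggests that it may be far too optimistic.'' So there is nothing to compare your attempt against; your closing paragraph, which concedes that the substantive vanishing remains open, is in fact the accurate summary of the situation.

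That said, two points in your outline deserve correction. First, there is no ``canonical quotient $S/\!/2^r,\eta \to S/\!/2^r$''; the construction goes the other way, $S/\!/2^r \to S/\!/2^r,\eta$, since the latter is obtained from the former by attaching an \Einfty $2$-cell. The morphism $j$ arises by \emph{extending} $S/\!/2^r\to H\Z/2^r$ over the new cell (as you in fact say in the next sentence), not by precomposing with a quotient.

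Second, and more seriously, your proposed route through the splitting of $S/\!/2^r,\eta,u'_1$ does not work as stated. Attaching an \Einfty cell is a pushout in $\mathscr{C}_S$, not a cofibre of $S$-modules, so there is no elementary long exact sequence relating $\pi_*(S/\!/2^r,\eta)$ and $\pi_*(S/\!/2^r,\eta,u'_1)$ of the kind you invoke. Even granting some exact triangle, a class that dies under $S/\!/2^r,\eta \to S/\!/2^r,\eta,u'_1$ would lie in the image of a boundary map, which says nothing about it being zero in $\pi_*(S/\!/2^r,\eta)$ itself. In particular the paper's later discussion (Lemma~\ref{lem:eta-sigma}) shows that in the closely related $S/\!/\eta$ the image of $\sigma$ is \emph{non-zero}; this is exactly the sort of phenomenon that makes the author doubt the conjecture, and your sketch gives no mechanism to rule it out here.
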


The reader is warned that we have no direct evidence for
this and the discussion of Section~\ref{subsec:HopfInvt-2}
suggests that it may be far too optimistic.

\subsection{Odd-primary examples associated with Hopf
invariant one elements}\label{subsec:HopfInvt-odd}

Let $p$ be an odd prime. Then we can kill the element
$\alpha_1$ to form $S/\!/\alpha_1$ which has an \Einfty
morphism $S/\!/\alpha_1\to\ell$ to the connective cover
of the Adams summand of $KU_{(p)}$ which is known to
possess an essentially unique \Einfty  structure by
results of~\cites{AB&BR:HGamma,AB&BR:ConnCovers}. Then
\begin{align*}
\pi_{2p-2}(S/\!/\alpha_1) &= \Z_{(p)}u_1, \\
H_{2p-2}(S/\!/\alpha_1;\Z_{(p)}) &= \Z_{(p)}x_{2p-2},
\end{align*}
where the Hurewicz image of $u_1$ is $px_{2p-2}$.

As in the discussion for $S/\!/p^r,\alpha_1,u'_1$, we
can apply Steiberger's splitting results to show that
$S/\!/\alpha_1,u_1$ splits as a wedge of $H\Z_{(p)}$
and suspensions of Eilenberg-Mac~Lane spectra $H\Z_{p^r}$
for various $r\geq1$.
\begin{conj}\label{conj:oddprime}
For an odd prime $p$ and $r>1$,\/ $S/\!/\alpha_1$ is
a characteristic for $H\Z_{(p)}$.
\end{conj}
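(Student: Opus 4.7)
The plan is to verify directly from Definition~\ref{defn:Char-Conn} that the natural morphism $j\colon S/\!/\alpha_1\to H\Z_{(p)}$, which exists by the universal property since $\alpha_1\mapsto 0$ in $H\Z_{(p)}$, is a characteristic morphism. Endow $S/\!/\alpha_1$ with the CW commutative $S$-algebra structure having a single \Einfty cell of dimension $2p-2$ attached along $\alpha_1$, so $T^{\<n\>}=S$ for $n<2p-2$ and $T^{\<n\>}=S/\!/\alpha_1$ for $n\geq 2p-2$. The nuclear condition~\eqref{eq:Char-Conn1} is immediate because $\alpha_1\in\pi_{2p-3}(S_{(p)})$ has order exactly $p$; the image condition~\eqref{eq:Char-Conn2} at stage $2p-3$ is immediate because $\pi_{2p-3}(S_{(p)})=\F_p\{\alpha_1\}$ lies entirely in $\ker j_*$; and for $0<n<2p-3$ nothing needs checking, as $\pi_n(S_{(p)})=0$ in that range.

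The real content is condition~\eqref{eq:Char-Conn2} at stages $n\geq 2p-2$, where no further \Einfty cells are attached. Since every positive-degree class of $\pi_*(S_{(p)})$ lies in $\ker j_*$, these conditions collapse to the single vanishing statement
\[
\im\bigl[(\iota_{S/\!/\alpha_1})_*\colon \pi_n(S_{(p)})\to \pi_n(S/\!/\alpha_1)\bigr]=0 \qquad\text{for all }n\geq 2p-2,
\]
that is, the entire augmentation ideal of $\pi_*(S_{(p)})$ must die in $\pi_*(S/\!/\alpha_1)$. Granting this, $S/\!/\alpha_1$ is minimal atomic and is its own characteristic, and Corollary~\ref{cor:char-ker=} applied with $R=S/\!/\alpha_1$ and $R'=H\Z_{(p)}$ (which share $\ker(\iota_\bullet)_*=\pi_{>0}(S_{(p)})$) identifies $S/\!/\alpha_1$ as a characteristic of $H\Z_{(p)}$.

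To attack the vanishing I would combine three strands. First, exploit the Steinberger-type splitting of $S/\!/\alpha_1,u_1$ as a wedge of Eilenberg--Mac~Lane spectra $H\Z/p^r$, in which the unit trivially vanishes in positive degrees, together with the pushout that builds $S/\!/\alpha_1,u_1$ from $S/\!/\alpha_1$ by adjoining an \Einfty cell of dimension $2p-1$ killing $u_1$; this reduces the question to controlling that single additional cell's effect on homotopy. Second, bound the image of the unit at $E_2$-level via the Adams spectral sequence based on $H^*(S/\!/\alpha_1;\F_p)$, whose Steenrod-algebra module structure is enriched by the Dyer--Lashof action around the attached \Einfty cell. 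Third, and most ambitiously, invoke Cohen's decomposition~\cite{JMC:Decomp} to express each higher Greek-letter element $\alpha_i,\beta_j,\ldots$ of $\pi_*(S_{(p)})$ as an iterated Toda or matric bracket built from $\alpha_1$, then upgrade those brackets to the \Einfty setting and verify that they become null once an \Einfty null-homotopy of $\alpha_1$ is adjoined.

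The main obstacle is precisely this third step: the conjecture essentially asserts that, as an \Einfty element, $\alpha_1$ generates the whole augmentation ideal of $\pi_*(S_{(p)})$ under Dyer--Lashof operations and higher power-operation brackets. This is the hard stable-homotopy input flagged by the author in the introduction, and it does not seem extractable from the formal framework of the preceding section alone. A full Adams-spectral-sequence computation for $S/\!/\alpha_1$ would in principle bypass Cohen, but is equally delicate because $H^*(S/\!/\alpha_1;\F_p)$ is not a free module over the Steenrod algebra.
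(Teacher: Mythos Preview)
The statement you are addressing is a \emph{Conjecture} in the paper, not a theorem; the paper offers no proof. In the introduction the author explicitly says these conjectures ``appear to involve non-trivial questions in stable homotopy theory'' and that the approach via Cohen's decomposition is one he has ``been unable to carry out''. So there is no paper proof to compare against.

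Your reduction is correct and matches the paper's own reading of what the conjecture means: as the paragraph following Conjecture~\ref{conj:eta-nu-sigma} makes explicit in the $2$-primary analogue, the claim that $S/\!/\alpha_1$ is a characteristic for $H\Z_{(p)}$ is equivalent to the unit homomorphism $\pi_n(S_{(p)})\to\pi_n(S/\!/\alpha_1)$ vanishing for all $n>0$. Your checks of \eqref{eq:Char-Conn1} and \eqref{eq:Char-Conn2} in degrees $\leq 2p-3$ are accurate, and you have correctly isolated the hard content as the vanishing for $n\geq 2p-2$.

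Your three proposed attacks are reasonable heuristics but, as you concede, none is a proof. The third (Cohen's decomposition) is exactly the route the author flags as tempting but unresolved. One small correction to your first strategy: the paper states that $S/\!/\alpha_1,u_1$ splits as a wedge of $H\Z_{(p)}$ \emph{together with} suspensions of $H\Z/p^r$, not only the latter; the $H\Z_{(p)}$ summand carries the unit, so the reduction to ``controlling a single additional cell'' is more delicate than you suggest, since the unit already factors nontrivially through that summand.
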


\subsection{$2$-primary examples associated with Hopf
invariant one elements}\label{subsec:HopfInvt-2}

In this section we take $p=2$ and consider the four
elements $2\in\pi_0(S)$, $\eta\in\pi_1(S)$, $\nu\in\pi_3(S)$
and $\sigma\in\pi_3(S)$ of Hopf invariant~$1$. We also
set $H=H\F_2$ and denote the mod~$2$ dual Steenrod
algebra by $\mathcal{A}_*=\mathcal{A}(2)_*$ and the
Steenrod algebra by $\mathcal{A}^*=\mathcal{A}(2)^*$.
By results of~\cite{BP-Einfinity}, the mod~$2$ homology
of $S/\!/\eta$, $S/\!/\nu$ and $S/\!/\sigma$ are all
polynomial on admissible Dyer-Lashof monomials on
generators $x_1\in H_1(S/\!/2)$, $x_2\in H_2(S/\!/\eta)$,
$x_4\in H_4(S/\!/\nu)$ and $x_8\in H_4(S/\!/\sigma)$:
\begin{align*}
H_*(S/\!/2) &= \F_2[\dlQ^Ix_1:\exc(I)>1],
& H_*(S/\!/\eta) &= \F_2[\dlQ^Ix_2:\exc(I)>2], \\
H_*(S/\!/\nu) &= \F_2[\dlQ^Ix_4:\exc(I)>4],
& H_*(S/\!/\sigma) &= \F_2[\dlQ^Ix_8:\exc(I)>8].
\end{align*}
The $\mathcal{A}_*$-coaction on the generator $x_{2^d}$
is given by
\[
\psi(x_{2^d}) = \zeta_1^{2^d}\otimes1 + 1\otimes x_{2^d},
\]
and the coaction on a generator $\dlQ^Ix_2$ can be found
using formulae in~\cite{Nishida}, at least in principal.
Dually, the Steenrod action satisfies
\[
\Sq^{2^d}_*(x_{2^d}) = 1,
\]
and we also have
\[
\Sq^{2^{d+k}}_*(x_{2^d}^{2^{k}}) = 1.
\]
This suggests that $\nu$ and $\sigma$ might map to zero
in $\pi_*(S/\!/\eta)$, at least modulo Adams filtration~$2$.
We will examine this in detail later.

There are \Einfty  morphisms
\[
\xymatrix{
S/\!/2\ar[drrr] && S/\!/\eta\ar[dr]
&& S/\!/\nu\ar[dl]  && S/\!/\sigma\ar[dlll] \\
&& & H & &&
}
\]
which induce ring homomorphisms
\[
\xymatrix{
H_*(S/\!/2)\ar[drrr] && H_*(S/\!/\eta)\ar[dr]
&& H_*(S/\!/\nu)\ar[dl]  && H_*(S/\!/\sigma)\ar[dlll] \\
&& & H_*(H) & && \\
&&& \mathcal{A}_*\ar@{=}[u] &&&
}
\]
which allow us to identify their images as
$\mathcal{A}_*$-subcomodule algebras of the dual
Steenrod algebra $\mathcal{A}_*$.
\begin{lem}\label{lem:Images}
The above homomorphisms give epimorphisms of
$\mathcal{A}_*$-comodule algebras
\[
\xymatrix{
H_*(S/\!/2)\ar[r]^(.45){\iso} & \F_2[\zeta_s : s\geq1]
& H_*(S/\!/\eta)\ar@{->>}[r]& \F_2[\zeta_s^2 : s\geq1] \\
H_*(S/\!/\nu)\ar@{->>}[r] & \F_2[\zeta_s^4 : s\geq1]
& H_*(S/\!/\sigma)\ar@{->>}[r]& \F_2[\zeta_s^8 : s\geq1]
}
\]
onto subalgebras of $\mathcal{A}_*$.
\end{lem}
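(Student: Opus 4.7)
The plan is to use that each map $S/\!/x\to H=H\F_2$ is a morphism of commutative $S$-algebras, so it induces a map $f\:H_*(S/\!/x)\to\mathcal{A}_*$ of commutative $\mathcal{A}_*$-comodule algebras that is natural under the Dyer-Lashof action. Hence $\im(f)$ is the smallest $\mathcal{A}_*$-subcomodule subalgebra of $\mathcal{A}_*$ closed under Dyer-Lashof operations and containing $f(x_{2^d})$, and I would compute it in stages.

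First I would pin down $f(x_{2^d})$. Comodule-compatibility combined with the stated coaction formula $\psi(x_{2^d})=\zeta_1^{2^d}\otimes1+1\otimes x_{2^d}$ forces $y:=f(x_{2^d})$ to satisfy $\psi(y)=\zeta_1^{2^d}\otimes1+1\otimes y$ on $\mathcal{A}_*$, so $y-\zeta_1^{2^d}$ is a primitive of degree $2^d$. The primitive subspace of $\mathcal{A}_*$ is spanned by $\{\zeta_1^{2^k}\}_{k\geq0}$ (equivalently, the indecomposables of $\mathcal{A}^*$ are $\Sq^{2^k}$), so $y\in\{0,\zeta_1^{2^d}\}$. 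The value $y=0$ is excluded by Hopf invariant one: in $H^*(C(x))=\F_2\{\iota_0,\iota_{2^d}\}$ one has $\Sq^{2^d}\iota_0=\iota_{2^d}$, and pushing this through $C(x)\to S/\!/x\to H$ identifies $f(x_{2^d})$ with the dual of $\Sq^{2^d}$, namely $\zeta_1^{2^d}$.

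Next I would show $\im(f)=\F_2[\zeta_s^{2^d}:s\geq1]$. For the inclusion $\subseteq$: the right-hand side is an $\mathcal{A}_*$-subcomodule by direct calculation from $\psi(\zeta_s)=\sum_{i+j=s}\zeta_i^{2^j}\otimes\zeta_j$ after raising to the $2^d$-th power, and it is closed under Dyer-Lashof operations by iterating the characteristic-two Cartan consequences $\dlQ^{2n}(z^2)=(\dlQ^n z)^2$ and $\dlQ^{2k+1}(z^2)=0$, which together imply $\dlQ^i(z^{2^d})=0$ unless $2^d\mid i$ and $\dlQ^{2^d k}(z^{2^d})=(\dlQ^k z)^{2^d}\in\F_2[\zeta_s^{2^d}]$. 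For the reverse inclusion I would invoke Steinberger's formula $\dlQ^{2^s}\zeta_s=\zeta_{s+1}$ in $\mathcal{A}_*$; combined with the identity above this yields
\[
\dlQ^{2^{s+d}}(\zeta_s^{2^d})=(\dlQ^{2^s}\zeta_s)^{2^d}=\zeta_{s+1}^{2^d},
\]
so starting from $\zeta_1^{2^d}\in\im(f)$, induction on $s$ places every $\zeta_s^{2^d}$ in $\im(f)$ and multiplicativity forces equality.

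The case $d=0$ additionally requires injectivity: the tower $x_1,\dlQ^2 x_1,\dlQ^4\dlQ^2 x_1,\ldots$ among the admissible Dyer-Lashof monomials generating $H_*(S/\!/2)$ maps under $f$ via Steinberger to the Milnor generators $\zeta_1,\zeta_2,\zeta_3,\ldots$, and a Hilbert-series comparison of the two polynomial $\F_2$-algebras forces $f$ to be an isomorphism. I expect the main technical obstacle to be the first step, ruling out $f(x_{2^d})=0$: this depends on the geometric content of Hopf invariant one via the cofibre $C(x)$, whereas everything subsequent reduces to routine Dyer-Lashof bookkeeping inside $\mathcal{A}_*$.
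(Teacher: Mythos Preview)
Your argument for the epimorphism claims is essentially the paper's: identify $f(x_{2^d})=\zeta_1^{2^d}$, use the Cartan formula to see that the image lies in $\F_2[\zeta_s^{2^d}:s\geq1]$, and invoke Steinberger to see that every $\zeta_s^{2^d}$ is hit. Two minor remarks: first, your comodule argument already forces $f(x_{2^d})=\zeta_1^{2^d}$ on its own, since $\psi(0)=0$ cannot contain the term $\zeta_1^{2^d}\otimes1$, so the separate Hopf-invariant step to rule out~$0$ is unnecessary; second, the identity $\dlQ^{2^s}\zeta_s=\zeta_{s+1}$ you quote is Steinberger's formula for the Milnor generators~$\xi_s$, whereas for the conjugates the relevant formula (used elsewhere in the paper) is $\dlQ^{2^r-2}\zeta_1=\zeta_r$. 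Either route gives the needed surjectivity.

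The genuine gap is your Hilbert-series argument for the isomorphism when $d=0$. You assert that the polynomial generators of $H_*(S/\!/2)$ are exactly the tower $x_1,\dlQ^2x_1,\dlQ^4\dlQ^2x_1,\ldots$, but this is false: $H_*(S/\!/2)$ is polynomial on \emph{all} admissible $\dlQ^Ix_1$ with $\exc(I)>1$, so for instance $\dlQ^3x_1$ is a generator in degree~$4$. A direct count gives $\dim_{\F_2}H_4(S/\!/2)=3$ (with basis $x_1^4$, $x_1\!\cdot\!\dlQ^2x_1$, $\dlQ^3x_1$) while $\dim_{\F_2}\mathcal{A}_4=2$ (with basis $\zeta_1^4$, $\zeta_1\zeta_2$), so the map is \emph{not} injective and no Hilbert-series comparison can succeed. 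In fact the symbol $\cong$ in the displayed statement appears to be a slip: the lemma's own text says ``epimorphisms'', the paper's proof establishes only that the image equals $\F_2[\zeta_s^{2^d}]$, and the paper elsewhere records that $S/\!/2$ splits as a nontrivial wedge of suspensions of $H\F_2$, which is incompatible with $H_*(S/\!/2)\cong\mathcal{A}_*$. So there is nothing to prove beyond the surjectivity you (and the paper) have already shown.
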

\begin{proof}
It is easy to see that in each case, the generator
$x_{2^d}$ maps to $\zeta_1^{2^d}$. It follows that
\[
\dlQ^{(i_1,\ldots,i_\ell)}x_{2^d} \longmapsto
\begin{cases}
 (\dlQ^{(i_1/2^d,\ldots,i_\ell/2^d)}\zeta_1)^{2^d}
   & \text{\textrm{if every $i_r$ is divisible by $2^d$}}, \\
 \ph{abcdefghk} 0 & \text{\textrm{otherwise}},
\end{cases}
\]
hence the image is a subring of $\F_2[\zeta_s^{2^d}:s\geq1]$.
Making use of Steinberger's determination of the
Dyer-Lashof action on $\mathcal{A}_*$ we see that the
image contains all of the elements $\zeta_s^{2^d}$,
therefore the image is exactly this subring of $2^d$-th
powers.
\end{proof}

There is a commutative diagram of \Einfty  morphisms
as indicated by solid arrows
\begin{equation}\label{eq:Morphisms?}
\xymatrix{
S/\!/2 \ar[r] &MO\ar@/^15pt/[rrrdd]& &&  \\
S/\!/\eta\ar[r]\ar@{.>}[u] & MU\ar[r]\ar[u] & kU\ar[dr] & & \\
&&& H\Z\ar[r] & H\F_2 \\
S/\!/\nu\ar[r]\ar@{.>}[uu] & MSp\ar[r]\ar[uu] & kO\ar[ur]\ar[uu] & &  \\
S/\!/\sigma\ar[r]\ar@{.>}[u] & MO\langle8\rangle\ar[r] & tmf\ar@/_15pt/[uur] &  &
}
\end{equation}
where the left-most horizontal morphisms exist because
the bottom cells of the Thom spectra support non-trivial
actions of Steenrod operations of the form $\Sq^{2^d}$
by the Wu formulae. We will consider the possible
existence of suitable morphisms corresponding to
the vertical dotted arrows.

Steinberger's work shows that there is a splitting
of $S/\!/2$ into a wedge of suspensions of $H=H\F_2$,
hence the ring homomorphism $\pi_*(S)\to\pi_*(S/\!/2)$
induced by the unit is trivial in positive degrees.
It follows that there are \Einfty  morphisms to
$S/\!/2$ from each of $S/\!/\eta$, $S/\!/\nu$ and
$S/\!/\sigma$. In each case, under the induced
homomorphism in homology, $x_{2^d}\mapsto\zeta_1^{2^d}$.
For the case of $S/\!/\eta$ we can also deduce that
such a map exists using the fact that the inclusion
of the bottom cell induces an isomorphism
\[
\pi_1(S^0) \xrightarrow{\;\iso\;}\pi_1(C_2),
\]
so in $H_*(S/\!/2)$ the $2$-cell is attached to the
bottom cell by~$\eta$ since $\Sq^2(x_1^2)=1$.

\begin{lem}\label{lem:Ceta-nu}
Under the ring homomorphism $\pi_*(S)\to\pi_*(S/\!/\eta)$
induced by the unit $S\to S/\!/\eta$, $\nu\mapsto 0$,
hence there is an \Einfty  morphism $S/\!/\nu\to S/\!/\eta$
inducing a ring homomorphism $H_*(S/\!/\nu)\to H_*(S/\!/\eta)$
under which $x_4\mapsto x_2^2$.
\end{lem}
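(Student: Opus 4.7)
The lemma has two contents: the vanishing $\nu \mapsto 0$ in $\pi_3(S/\!/\eta)$, and then the construction of the $\mathcal{E}_\infty$ morphism together with the identification $x_4 \mapsto x_2^2$. The second part is essentially formal. Since $S/\!/\nu$ arises as a pushout $S \cup_{\tilde\nu} \tilde{\mathbb{P}}(D^4)$, a commutative $S$-algebra map $S/\!/\nu \to S/\!/\eta$ is exactly a null-homotopy of $S^3 \xrightarrow{\nu} S \to S/\!/\eta$; the vanishing supplies this. For the induced map $\phi_*$ on mod-$2$ homology, note that $H_*(S/\!/\eta) = \F_2[Q^Ix_2 : \exc(I) > 2]$ has $\F_2$-basis $\{x_2^2\}$ in degree $4$ (all polynomial generators $Q^Ix_2$ other than $x_2$ itself lie in degree $\geq 5$). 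Steenrod-naturality then yields $\Sq^4_*(\phi_*(x_4)) = \phi_*(\Sq^4_*(x_4)) = \phi_*(1) = 1$, forcing $\phi_*(x_4) = x_2^2$ as the unique nonzero class available.

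The substantive task is therefore showing $\nu = 0$ in $\pi_3(S/\!/\eta)$. The plan is a low-dimensional cellular analysis of $S/\!/\eta$ as an $S$-module. The $\F_2$-basis of $H_*(S/\!/\eta)$ in degrees $\leq 4$ is $\{1, x_2, x_2^2\}$, with zero in degrees $1$ and $3$; by minimal atomicity the associated nuclear CW structure as an $S$-module has exactly one cell in each of dimensions $0, 2, 4$. The relation $\Sq^2_*(x_2) = 1$ identifies the $2$-skeleton as $C\eta = S \cup_\eta e^2$, so the $4$-skeleton has the form $Y = C\eta \cup_\alpha e^4$ for some $\alpha \in \pi_3(C\eta)$. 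A Puppe sequence computation using $\eta^3 = 4\nu$ in $\pi_3(S)_{(2)} \cong \Z/8$ gives $\pi_3(C\eta) \cong \Z/4$, generated by $\bar\nu := \iota_*(\nu)$, so $\alpha = k\bar\nu$ for some $k \in \{0,1,2,3\}$.

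The key step is to pin down that $k$ is odd. The formula $\Sq^4_*(x_2^2) = 1$ gives $\Sq^4 \neq 0$ from $H^0(S/\!/\eta;\F_2)$ to $H^4(S/\!/\eta;\F_2)$, and naturality under the inclusion $Y \hookrightarrow S/\!/\eta$ (which is an isomorphism on $H^0$ and $H^4$) transfers this conclusion to $Y$. Since $\alpha$ factors as $S^3 \xrightarrow{k\nu} S \hookrightarrow C\eta$, pushout rearrangement presents $Y \simeq (S \cup_{k\nu} e^4) \cup_\eta e^2$; the cofibration $(S \cup_{k\nu} e^4) \hookrightarrow Y \to S^2$ preserves $H^0$ and $H^4$, so by naturality the $\Sq^4$ on $Y$ coincides with that on the two-cell complex $S \cup_{k\nu} e^4$, which records the mod-$2$ Hopf invariant $k \bmod 2$. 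Hence $k$ is odd, $\alpha$ generates $\pi_3(C\eta) \cong \Z/4$, and $\bar\nu$ dies in $\pi_3(Y)$, giving $\nu \mapsto 0$ in $\pi_3(S/\!/\eta)$. The principal technical nuisance is isolating $\alpha$ away from the order-$2$ subgroup $\langle 2\bar\nu\rangle$; a conceivable alternative is a direct bar or Goodwillie-style spectral sequence computation of $\pi_3(\tilde{\mathbb{P}} C\eta)$, but the pushout-rearrangement argument above is elementary and self-contained.
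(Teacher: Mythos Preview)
Your proof is correct and follows essentially the same route as the paper: compute $\pi_3(C_\eta)\cong\Z/4$ generated by the image of~$\nu$, realise the $4$-skeleton of $S/\!/\eta$ as $C_\eta\cup_\alpha e^4$, and use the nontriviality of $\Sq^4$ to force $\alpha$ to be a generator, whence $\nu\mapsto 0$. Your pushout-rearrangement argument $Y\simeq(S\cup_{k\nu}e^4)\cup_\eta e^2$ together with the cofibre to $S^2$ is a clean way to make precise the paper's terser assertion that ``the attaching map is detected by $\Sq^4$, therefore it must be a generator of $\pi_3(C_\eta)$''; likewise your justification that $x_4\mapsto x_2^2$ via the degree-$4$ basis and $\Sq^4_*$-naturality is more explicit than the paper's treatment.
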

\begin{proof}
Recall that there is a homotopy equivalence of \Einfty
ring spectra
\[
\tilde{\mathbb{P}}C_\eta \simeq S/\!/\eta.
\]
The long exact sequence for the homotopy of the mapping
cone $C_\eta$ has an exact portion
\[
\xymatrix{
\pi_3(S^1) \ar@{ >->}[r]^{\eta} \ar[d]_{\iso}
& \pi_3(S^0)\ar@{=}[dd] \ar[r]
& \pi_3(C_\eta) \ar[r]
& \pi_2(S^1) \ar@{->>}[r]^{\;\eta\;} \ar[d]_{\iso}
& \pi_2(S^0)\ar@{=}[dd]  \\
\pi_2(S^0) \ar@{=}[d] &&& \pi_1(S^0) \ar@{=}[d] & \\
\Z/2\,\eta^2 & \Z/8\,\nu & & \Z/2\,\eta & \Z/2\,\eta^2 \\
\eta^2\ar@{|->}[r]& \eta^3=4\nu &&\eta\ar@{|->}[r] & \eta^2
}
\]
showing that
\[
\pi_3(S^0)/\eta^3 = \pi_3(S^0)/4\nu
 \xrightarrow{\;\iso\;} \pi_3(C_\eta).
\]
In $H_*(S/\!/\eta)$ we have $\Sq^4(x_2^2)=1$. If we
realise $S/\!/\eta$ as a minimal CW $S$-module, its
$4$-skeleton has one cell in each of the dimensions
$0$, $2$ and $4$. The attaching map of the $4$-cell
to the $2$-skeleton is detected by $\Sq^4$, therefore
it must be one of the generators of $\pi_3(C_\eta)$
and so homotopic to a map which factors through $S^0$
where it agrees with $\pm\nu\bmod{4}$. Thus there is
a map $C_\nu\to S/\!/\eta$ which extends to a morphism
of \Einfty  ring spectra
\[
\tilde{\mathbb{P}}C_\nu \xrightarrow{\;\simeq\;}
   S/\!/\nu\to S/\!/\eta,
\]
inducing the stated homomorphism in homology.
\end{proof}

The situation for $S/\!/\nu$ and $S/\!/\sigma$
is more involved.
\begin{lem}\label{lem:nu-sigma}
There is no morphism of \Einfty  ring spectra
$S/\!/\sigma\to S/\!/\nu$ for which the induced
homology homomorphism sends $x_8$ to $x_4^2$.
\end{lem}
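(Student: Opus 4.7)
The plan is to reduce the non-existence claim to showing that $\sigma\neq 0$ in $\pi_7(S/\!/\nu)$. By the adjunction $\tilde{\mathbb{P}}\dashv\tilde{\mathbb{U}}$ and the equivalence $\tilde{\mathbb{P}}C_\sigma\homeq S/\!/\sigma$, any such $\mathcal{E}_\infty$ morphism $f$ corresponds bijectively to an $S^0$-under $S$-module morphism $g\:C_\sigma\to S/\!/\nu$, with $f_*=g_*$ in mod~$2$ homology. Since
\[
\psi(x_8)=\zeta_1^8\otimes 1+1\otimes x_8,\qquad \psi(x_4^2)=\zeta_1^8\otimes 1+1\otimes x_4^2,
\]
naturality of the $\mathcal{A}_*$-coaction rules out $g_*(x_8)=0$ (which would violate $(\id\otimes g_*)\psi(x_8)=\psi(g_*(x_8))$) and hence forces $g_*(x_8)=x_4^2$ whenever $g$ exists. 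So the lemma is equivalent to the statement that no $\mathcal{E}_\infty$ morphism $S/\!/\sigma\to S/\!/\nu$ exists at all, equivalently that $\sigma$ does not null in $\pi_7(S/\!/\nu)$.

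Next I would realise $S/\!/\nu=\tilde{\mathbb{P}}C_\nu$ as a minimal CW $S$-module. Because $H_n(S/\!/\nu)=0$ for $n\in\{1,2,3,5,6,7\}$, its $7$-skeleton is $C_\nu=S\cup_\nu e^4$, and the single $8$-cell representing $x_4^2=\dlQ^4(x_4)$ is attached by a class $a\in\pi_7(C_\nu)$. Cells of dimension $\geq 9$ leave $\pi_7$ unchanged, so $\pi_7(S/\!/\nu)\iso\pi_7(C_\nu)/\langle a\rangle$. The cofibre sequence $S^3\xrightarrow{\nu}S\to C_\nu\to S^4\xrightarrow{\Sigma\nu}S^1$, together with $\pi_7(S)=\Z/240\{\sigma\}$, $\pi_3(S)=\Z/24\{\nu\}$, $\pi_6(S)=\Z/2\{\nu^2\}$ and the relation $\nu\cdot\nu=\nu^2\neq 0$, yields the short exact sequence
\[
0\to\Z/240\{\sigma\}\to\pi_7(C_\nu)\to\Z/12\{2\nu\}\to 0,
\]
in which the quotient is the kernel of multiplication by $\nu$.

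The crux is to locate $a$ in this extension; specifically, to show that its image in $\Z/12\{2\nu\}$ is nonzero. For this I would analyse the extended power $D_2(C_\nu)=(E\Sigma_2)_+\wedge_{\Sigma_2}(C_\nu\wedge C_\nu)$, which provides the $x_4^2$-cell as the bottom cell of the $\Sigma^8\RP^\infty_+$-summand arising from the $\Sigma_2$-fixed cell $e^4\wedge e^4$ of $C_\nu\wedge C_\nu$. Because $C_\nu$ has cells only in degrees $0$ and $4$, the $7$-skeleton of $C_\nu\wedge C_\nu$ is $S^0\vee S^4\vee S^4$, and the boundary $\partial(e^4\wedge e^4)\simeq S^7$ attaches to the two free $4$-cells by the diagonal $(\nu,\nu)\:S^7\to S^4\vee S^4$. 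Passing to the $\Sigma_2$-homotopy orbit identifies the two $4$-cells and folds $(\nu,\nu)$ to $2\nu\in\pi_7(S^4)$, producing the claimed $2\nu$-component of $a$. A short arithmetic check then finishes the proof: if $ka=\sigma$ for some $k\in\Z$, killing the $2\nu$-component forces $k\equiv 0\pmod{12}$, and writing $k=12k'$ one would need $12k'\equiv 1\pmod{240}$, impossible because $\gcd(12,240)=12$. Hence $\sigma\notin\langle a\rangle$, so $\sigma\neq 0$ in $\pi_7(S/\!/\nu)$ and no morphism $f$ of the stated kind can exist.

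The main obstacle is the quadratic-construction step: rigorously showing that the $\Sigma_2$-homotopy orbit of $(\nu,\nu)$ really contributes the $2\nu$-component of $a$, and that neither higher cells of $D_2(C_\nu)$ nor higher-weight pieces of $\tilde{\mathbb{P}}C_\nu$ cancel it before it reaches $\pi_7(S/\!/\nu)$; this requires carefully comparing the minimal CW filtration of $\tilde{\mathbb{P}}C_\nu$ with the weight filtration coming from the free $\mathcal{E}_\infty$-algebra structure on $C_\nu$.
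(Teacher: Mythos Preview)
Your proposal follows essentially the same route as the paper: reduce to showing that $\sigma$ is nonzero in $\pi_7(S/\!/\nu)$, compute $\pi_7(C_\nu)$ from the cofibre sequence, and use the symmetrisation $C_\nu\wedge C_\nu\to D_2(C_\nu)$ to analyse the attaching map of the $8$-cell carrying $x_4^2$.

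Two small points of comparison. First, the paper works $2$-locally throughout (per its standing conventions), so the relevant extension is $0\to\Z/16\{\sigma\}\to\pi_7(C_\nu)\to\Z/4\{\widetilde{2\nu}\}\to0$ rather than your integral version; more importantly, the paper asserts this extension \emph{splits} and identifies the attaching map as $\sigma+\widetilde{2\nu}$ with both components pinned down, via an integral-homology argument attributed to Eccles. Your fold argument recovers only the $\widetilde{2\nu}$-component, and your arithmetic step ``$12k'\equiv1\pmod{240}$'' tacitly presupposes a splitting (so that a $\sigma$-component of $a$ is well defined): without it, a cyclic extension would let $\sigma$ lie in $\langle a\rangle$, so this is exactly the place where the extra information the paper invokes is needed. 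Second, your comodule-coaction argument that any $\mathcal{E}_\infty$ morphism is forced to send $x_8\mapsto x_4^2$ is a pleasant strengthening the paper leaves implicit; it shows the hypothesis in the lemma is automatic.
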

\begin{proof}
The mapping cone of $\nu$
has an associated long exact sequence
\[
\xymatrix{
\pi_7(S^3)\ar[r]^(.56){\nu}\ar[d]_{\iso}
& \pi_7(S^0)\ar@{ >->}[r] \ar@{=}[dd]
& \pi_7(C_\nu) \ar[r]
& \pi_7(S^4)\ar@{->>}[r]^{\nu} \ar[d]_{\iso}
& \pi_7(S^1)\ar[d]_{\iso}  \\
\pi_4(S^0)\ar@{=}[d]
&
&
&\pi_3(S^0)\ar@{=}[d]
&\pi_6(S^0)\ar@{=}[d] \\
0
&\Z/16\,\sigma
&
&\Z/8\,\nu
&\Z/2\,\nu^2
}
\]
where the element $2\nu\in\pi_3(S^0)$ has Adams
filtration~$2$. It follows that
\[
\pi_7(C_\nu) \iso \pi_7(S^0)\oplus 2\pi_7(S^4)
 \iso \Z/16\,\sigma \oplus \Z/4\,\widetilde{2\nu}.
\]
In $H_*(S/\!/\nu)$ we have $\Sq^8(x_4^2)=1$, but
it does not follow that the attaching map of the
$8$-cell to the $4$-skeleton $C_\nu$ can be taken
to be~$\sigma$. In fact, the attaching map can be
seen to be $\sigma + \widetilde{2\nu}$ since the
natural symmetrisation map
\[
C_\nu \wedge C_\nu \to
  E\Sigma_2\ltimes_{\Sigma_2}(C_\nu \wedge C_\nu)
\]
induces the fold map on the $4$-cells and a careful
analysis in integral homology shows that the attaching
map is as claimed (I learnt this argument from Peter
Eccles; see Figure~\ref{fig:1}). The upshot is that
there is no morphism of \Einfty  ring spectra
$S/\!/\sigma\to S/\!/\nu$ since there can be no map
$C_\sigma\to S/\!/\nu$ extending the unit.
\end{proof}

\begin{figure}[ht]
\[
\xymatrix{
  & C_\nu \wedge C_\nu\ar[rrrrr] & &&
  && E\Sigma_2\ltimes_{\Sigma_2}(C_\nu \wedge C_\nu) & \\
  && *+[o][F-]{\ph{x}}\ar@{-}[dd]^{\nu} \ar@{-}[ddll]
     && && *+[o][F-]{\ph{x}}\ar@{-}[dd]^{\sigma + \widetilde{2\nu}} & \\
  && && &&& \\
  *+[o][F-]{\ph{x}}\ar@{-}[dd]^{\nu} && *+[o][F-]{\ph{x}} &\ar@{~>}[rr]&
      && *+[o][F-]{\ph{x}}\ar@{-}[dd]^{\nu} & \\
  && && &&& \\
  *+[o][F-]{\ph{x}} && && && *+[o][F-]{\ph{x}} &
}
\]
\caption{}
\label{fig:1}
\end{figure}

It follows that the mapping cone of the composition
\[
\xymatrix{
S^3\vee S^7\ar[r]_{\nu\vee\sigma}\ar@/^18pt/[rr]^{\nu+\sigma}
& S^0\vee S^0\ar[r]_(.6){\mathrm{fold}} & S^0
}
\]
gives rise to the minimal atomic commutative
$S$-algebra
\[
S/\!/(\nu,\sigma) = \tilde{\mathbb{P}}C_{\nu+\sigma}.
\]
Then there are \Einfty morphisms
\[
S/\!/\nu \to S/\!/(\nu,\sigma) \to MSp \to kO
\]
and in fact $S/\!/(\nu,\sigma) \to MSp$ is
an $8$-equivalence in the classical sense.
By~\cite{BGRtaq}*{proposition~5.1}, all of
these are minimal atomic spectra.

For $S/\!/\sigma$, there is a morphism of
commutative $S$-algebras $S/\!/\sigma\to tmf$
and we expect this to be a characteristic.
There is a morphism $S/\!/\sigma\to H\F_2$
inducing a ring homomorphism
\[
H_*(S/\!/\sigma)\to H_*(H\F_2) = \mathcal{A}_*
\]
whose image consists of the eighth powers.

The following result is even more challenging
to verify.
\begin{lem}\label{lem:eta-sigma}
There is no morphism of \Einfty  ring spectra
$S/\!/\sigma\to S/\!/\eta$ for which the induced
homology homomorphism sends $x_8$ to $x_2^4$.
\end{lem}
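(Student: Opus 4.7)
My plan is to adapt the argument used for Lemma~\ref{lem:nu-sigma}. By the reduced free functor adjunction $S/\!/\sigma\simeq\tilde{\mathbb{P}}C_\sigma$, giving an \Einfty morphism $S/\!/\sigma\to S/\!/\eta$ sending $x_8$ to $x_2^4$ is equivalent to giving a map of $S$-modules under~$S^0$, $j\:C_\sigma\to S/\!/\eta$, whose top $8$-cell lifts $x_2^4\in H_8(S/\!/\eta;\F_2)$. The obstruction to extending the unit to such a $j$ lives in $\pi_7$ of the $7$-skeleton of a minimal CW $S$-module model of $S/\!/\eta$; the task is to identify this obstruction and show it is nontrivial modulo classes pulled back from $\pi_7(S^0)$.

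Since $x_2^4=(x_2^2)^2=\dlQ^4(x_2^2)$, the $8$-cell realising it arises via the \Einfty squaring operation applied to the $4$-cell $x_2^2$, so it lives inside $E\Sigma_2\ltimes_{\Sigma_2}((S/\!/\eta)^{[4]})^{\wedge 2}$. By Lemma~\ref{lem:Ceta-nu}, that $4$-cell is attached to the $2$-skeleton $C_\eta$ by a class agreeing with $\pm\nu$ modulo~$4$. Running the Eccles-style argument of Figure~\ref{fig:1} in this setting, the symmetrization map onto the $\Sigma_2$-quotient induces the fold on the pair of $4$-cells of $((S/\!/\eta)^{[4]})^{\wedge 2}$ and therefore produces a $\widetilde{2\nu}$-type correction to the attaching map of the top $8$-cell. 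Thus the attaching map of $x_2^4$ in the $7$-skeleton of $S/\!/\eta$ should take the form $\sigma+\xi$, where $\xi$ lies in the image of $\pi_7$ of the $4$-skeleton and factors through the $4$-cell $x_2^2$ rather than through $S^0$. Consequently $\xi$ lies outside the image of $\pi_7(S^0)\to\pi_7((S/\!/\eta)^{[7]})$, so no choice of null homotopy for $\sigma$ absorbs it, and the extension $j$ cannot be constructed.

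The hard part will be verifying the nonvanishing of~$\xi$. In the $\nu$-case the mapping cone $C_\nu$ has only two cells and $\widetilde{2\nu}$ is visibly nontrivial in $\pi_7(C_\nu)$. Here the $4$-skeleton of $S/\!/\eta$ already has three cells in dimensions $0,2,4$ with nontrivial attaching maps, and the relation $\eta=0$ in $\pi_*(S/\!/\eta)$ forces $2\nu=\eta^3=0$, so naive cancellations could plausibly collapse the candidate correction in the larger skeleton. The persistence of $\xi$ therefore needs to be checked by carefully tracking the extended-power cell structure through dimension~$8$. I anticipate doing this either via an Adams spectral sequence computation for the homotopy of the $4$-skeleton of $S/\!/\eta$ and its extended square, or by a direct cellular calculation in the spirit of Eccles's argument but generalised to this more intricate setting, using the James-type splitting of $E\Sigma_2\ltimes_{\Sigma_2}((S/\!/\eta)^{[4]})^{\wedge 2}$ to isolate the contribution of the fold on the two $4$-cells.
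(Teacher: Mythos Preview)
Your approach is genuinely different from the paper's, but you should know that the paper does not actually give a self-contained argument here: its entire proof is the assertion that the image of $\sigma$ under the unit $\pi_*(S)\to\pi_*(S/\!/\eta)$ is nonzero (of order~$4$, Adams filtration~$2$), with the computation attributed to John Rognes. That single fact immediately precludes any extension of the unit to $C_\sigma\to S/\!/\eta$, and in particular the one sending $x_8\mapsto x_2^4$.

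Your proposal is an attempt to supply what the paper omits, by adapting the Eccles analysis from Lemma~\ref{lem:nu-sigma}. Two issues:

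\emph{First, a logical gap.} The obstruction to extending the unit over $C_\sigma$ is the class of $\sigma$ in $\pi_7(S/\!/\eta)$, and this is governed by \emph{all} of the $8$-cells in a minimal CW model, not just the cell $x_2^4$. In the relevant range $H_*(S/\!/\eta)$ already contains $x_2\dlQ^4x_2$ and $\dlQ^6x_2$ in degree~$8$, so even if you show that the attaching map of $x_2^4$ is $\sigma+\xi$ with $\xi$ not in the image of $\pi_7(S^0)$, another $8$-cell could be attached along a class congruent to $\xi$, and together they would kill $\sigma$. Your sentence ``no choice of null homotopy for $\sigma$ absorbs it'' tacitly assumes the null-homotopy must be realised by the single cell $x_2^4$; that is not forced by the condition $j_*(x_8)=x_2^4$ in mod~$2$ homology.

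\emph{Second, the acknowledged hard part.} You correctly flag that verifying $\xi\neq0$ in the larger $7$-skeleton is the crux, and that the vanishing of $\nu$ in $\pi_*(S/\!/\eta)$ (Lemma~\ref{lem:Ceta-nu}) threatens exactly the kind of collapse you fear. (Incidentally, $\eta^3=4\nu$, not $2\nu$; but since $\nu$ itself already dies this is moot.) The method you propose for settling it---an Adams spectral sequence computation for $\pi_7$ of the relevant skeleton of $S/\!/\eta$---is essentially the Rognes computation the paper is citing. So your route, once the gap above is repaired by tracking all the $8$-cells, converges on the same endpoint: one must compute that $\sigma$ survives to a nonzero filtration-$2$ class in $\pi_7(S/\!/\eta)$.
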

\begin{proof}
The point is that the image of $\sigma$ in
$\pi_*(S/\!/\eta)$ is non-zero. This was shown
in an unpublished calculation by John Rognes
(private communication~2013). This image has
order~$4$ (instead of~$16$), and in the Adams
spectral sequence, it has filtration~$2$ rather
than~$1$.
\end{proof}
\begin{conj}\label{conj:eta-nu-sigma}
Each of the following is a characteristic, where
the maps are compositions of those mentioned
above with standard ones.
\[
\xymatrix{
& S/\!/(\eta,\sigma)\ar[dl]\ar[d]\ar[dr] & \\
 MU\ar[r] & kU\ar[r] & H\Z_{(2)}  \\
}
\]

\[
\xymatrix{
& S/\!/(\nu,\sigma)\ar[dl]\ar[dr] & \\
MSp\ar[rr] & & kO \\
}
\]

\[
S/\!/\sigma\to tmf
\]
\end{conj}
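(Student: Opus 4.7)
The plan is to verify each candidate $T$ against Definition~\ref{defn:Char-Conn}, using the existing $\mathcal{E}_\infty$ morphisms $j\:T\to R$ listed in the conjecture and the reduction provided by Proposition~\ref{prop:Char-Core}. For each of $T=S/\!/\eta,\sigma$, $T=S/\!/\nu,\sigma$, and $T=S/\!/\sigma$, I would first show that $T$ is minimal atomic as a commutative $S$-algebra. By Lemma~\ref{lem:Images} and its extension to multiple attached cells, $H_*(T;\F_2)$ is a polynomial algebra on admissible Dyer-Lashof monomials on $x_1,x_7$ (resp.\ $x_3,x_7$ and $x_7$), and the mod~$2$ unit $H_*(S)\to H_*(T)$ is the inclusion of the bottom cell. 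This, together with the description of the attaching maps as genuine Hopf-invariant-one elements, should place $T$ in the setting of~\cite{BGRtaq}*{section~3} where nuclearity and minimal atomicity can be extracted.

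The next step is to match the Dyer-Lashof-generated cell structure of $T$ with the conditions~\eqref{eq:Char-Conn1}--\eqref{eq:Char-Conn2} for $j\:T\to R$. Since attachment of a single $\mathcal{E}_\infty$ cell along, say, $\sigma$ encodes the killing of $\sigma$ together with its entire Dyer-Lashof closure in $\pi_*$, the content of the conjecture is that the surviving image of $\pi_*(S)$ in $\pi_*(T^{\<n\>})$ at dimensions \emph{other} than those where we explicitly attach cells either vanishes or lies outside $\ker j^{\<n\>}_*$. For the trio $MU$, $kU$, $H\Z_{(2)}$ in the first conjecture, the strategy is to exploit a Steinberger-type splitting of $T=S/\!/\eta,\sigma$ into suspensions of Eilenberg-Mac~Lane spectra (conditional on the bottom-cell Milnor primitives being generated by $\Sq^{2^d}$-operations on the explicit homology generators). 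Such a splitting would force the unit homomorphism $\pi_*(S)\to\pi_*(T)$ to be trivial above degree~$0$ except for the images of $\eta$ and~$\sigma$ (the latter of order~$4$ by Lemma~\ref{lem:eta-sigma}), and the matching of these images with $\ker j_*$ for each of the three targets reduces to the (easy) check that $\pi_1$, $\pi_7$ of $MU$ and $kU$ vanish and that $H\Z_{(2)}$ has zero positive homotopy. One would then conclude via Proposition~\ref{prop:char-ker=} and Corollary~\ref{cor:char-ker=} that any commutative $S$-algebra with the same kernel of its unit on homotopy admits $T$ as its characteristic.

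For the second conjecture involving $S/\!/\nu,\sigma$, $MSp$, and $kO$, the same plan applies after replacing Lemma~\ref{lem:Ceta-nu} with its analogue verifying that the cell structure realised by the sum $\nu+\sigma$ (see Lemma~\ref{lem:nu-sigma} and Figure~\ref{fig:1}) is compatible with the vertical dotted arrows of~\eqref{eq:Morphisms?}. For $tmf$ the situation is most delicate, because $\pi_*(tmf)$ has rich $2$-torsion and the unit $\pi_*(S)\to\pi_*(S/\!/\sigma)$ is certainly \emph{not} trivial in all positive degrees above~$7$: instead one must show that every surviving class in the image of the unit already maps nontrivially to $\pi_*(tmf)$, so that no further $\mathcal{E}_\infty$ cells are demanded by~\eqref{eq:Char-Conn2}. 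This requires a detailed understanding of the Adams spectral sequence for $S/\!/\sigma$ that tracks which Toda brackets and Massey products in $\pi_*(S)$ are detected by $tmf$.

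The main obstacle, and the reason these remain conjectures, is precisely this last step: controlling the image of the unit $\pi_*(S)\to\pi_*(T)$ in sufficiently high degrees. For $H\Z_{(2)}$ the desired vanishing reduces to a Steinberger splitting, and an optimistic reading of~\cite{JMC:Decomp} suggests such splittings might be forced by the presence of the relevant Milnor primitives; but converting this intuition into a proof, and especially handling the $tmf$ case where no such wholesale splitting can hold, requires input from stable homotopy theory that we have not been able to supply.
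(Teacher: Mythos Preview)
The statement is a \emph{conjecture}, and the paper does not prove it. Immediately after stating it, the paper only reformulates what each case would amount to: the first is equivalent to the unit homomorphism $\pi_*(S)\to\pi_*(S/\!/\eta,\sigma)$ being trivial in positive degrees; the second to the equalities
\[
\ker[\pi_*(S)\to\pi_*(S/\!/\nu,\sigma)]
=\ker[\pi_*(S)\to\pi_*(kO)]
=\ker[\pi_*(S)\to\pi_*(MSp)],
\]
with Kochman's result supplying the second of these; the third to
\[
\ker[\pi_*(S)\to\pi_*(S/\!/\sigma)]=\ker[\pi_*(S)\to\pi_*(tmf)].
\]
No further argument is offered.

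Your proposal is not a proof either, and you say as much in the final paragraph. Your strategy is consonant with the paper's own reformulation: reduce via Proposition~\ref{prop:char-ker=} and Corollary~\ref{cor:char-ker=} to a statement about the kernel of the unit on $\pi_*$, and then try to control that kernel (in the first case by a hoped-for Steinberger-type splitting, in the $tmf$ case by direct comparison with what $tmf$ detects). That is exactly where the paper leaves matters.

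Two small corrections to your write-up. First, the degrees of the homology generators are off by one throughout: an $\mathcal{E}_\infty$ cell attached along an element of $\pi_n(S)$ contributes a polynomial generator in degree $n+1$, so the generators for $S/\!/\eta,\sigma$ are $x_2,x_8$ (not $x_1,x_7$), for $S/\!/\nu,\sigma$ they are $x_4,x_8$, and for $S/\!/\sigma$ it is $x_8$. Second, your sentence that a splitting ``would force the unit homomorphism $\pi_*(S)\to\pi_*(T)$ to be trivial above degree~$0$ except for the images of $\eta$ and $\sigma$'' is garbled: the images of $\eta$ and $\sigma$ in $\pi_*(S/\!/\eta,\sigma)$ are zero by construction. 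What must be shown is that \emph{every} positive-degree element of $\pi_*(S)$ dies in $\pi_*(S/\!/\eta,\sigma)$, i.e., that killing $\eta$ and $\sigma$ multiplicatively already kills everything else. That is the open point.
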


Let us consider what the statements in this
conjecture really amount to. The first is
equivalent to the unit homomorphism
$\pi_*(S)\to\pi_*(S/\!/(\eta,\sigma))$ being
trivial in positive degrees. The second
statement is equivalent to the equalities
\[
\ker[\pi_*(S)\to\pi_*(S/\!/\nu,\sigma)]
=
\ker[\pi_*(S)\to\pi_*(kO)]
=
\ker[\pi_*(S)\to\pi_*(MSp)],
\]
where the second equality was proved by
Stan Kochman~\cite{SOK:MAMS-III}*{part I},
but the first would also imply it. The
third statement is equivalent to the
equality
\[
\ker[\pi_*(S)\to\pi_*(S/\!/\sigma)]
=
\ker[\pi_*(S)\to\pi_*(tmf)].
\]

To end, we mention some further results on these spectra.
\begin{prop}\label{prop:kU-KO-epi}
The $2$-local morphisms $S/\!/\eta\to kU$ and $S/\!/\nu\to kO$
induce epimorphisms on $\pi_*(-)$.
\end{prop}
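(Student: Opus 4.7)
The plan is, in each case, to identify a set of $\Z_{(2)}$-algebra generators of the target's homotopy ring and to exhibit preimages in $\pi_*$ of the source; surjectivity then follows from the fact that the induced map on $\pi_*$ is a ring homomorphism.

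For $S/\!/\eta\to kU$, we have $\pi_*(kU)_{(2)}=\Z_{(2)}[u]$ with $u\in\pi_2$ and $\pi_{\mathrm{odd}}(kU)=0$, so it suffices to hit $u$. The universal property of $S/\!/\eta\simeq\tilde{\mathbb{P}}C_\eta$ presents an $\mathcal{E}_\infty$-morphism $S/\!/\eta\to kU$ as an $S^0$-under map $C_\eta\to kU$, equivalently as a null-homotopy of $\eta\in\pi_1(kU)=0$; the set of such null-homotopies forms a torsor over $\pi_2(kU)$. Choosing the parameter to be $u$, the resulting class $v\in\pi_2(S/\!/\eta)$ maps to $u$, and by multiplicativity $v^n\mapsto u^n$, which generates $\pi_{2n}(kU)$ for every $n$.

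For $S/\!/\nu\to kO$, recall that $\pi_*(kO)_{(2)}$ is generated as a $\Z_{(2)}$-algebra by $\eta\in\pi_1$, $\omega\in\pi_4$, and $\beta\in\pi_8$, subject to $2\eta=0$, $\eta^3=0$, $\eta\omega=0$, and $\omega^2=4\beta$. The class $\eta$ is in the image from $\pi_1(S)\to\pi_1(S/\!/\nu)$, and the analogous analysis produces $w\in\pi_4(S/\!/\nu)$ mapping to $\omega$. The delicate step is the Bott class $\beta$: since $\omega^2=4\beta$, the square $w^2$ hits only $4\beta$, not a generator. To lift $\beta$, we use a class in $\pi_8(S/\!/\nu)$ coming from the second extended power $E\Sigma_2\ltimes_{\Sigma_2}(C_\nu\wedge C_\nu)$, which appears as an $S$-module summand of $\tilde{\mathbb{P}}C_\nu$; composing with $S/\!/\nu\to MSp\to kO$ we seek a class mapping to a generator of $\pi_8(MSp)_{(2)}$ complementary to $[\HP^1]^2$ (a suitable representative of $[\HP^2]$), which the Hirzebruch-type map $MSp\to kO$ sends to a $\Z_{(2)}$-unit multiple of $\beta$. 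Multiplicativity and the relations above then produce preimages of every monomial $\eta^i\omega^j\beta^k$ in $\pi_*(kO)_{(2)}$.

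The main technical obstacle is the construction of this $\pi_8(S/\!/\nu)$-class lifting $\beta$: one must analyse the quadratic extended-power contribution to $\pi_8(\tilde{\mathbb{P}}C_\nu)$ and verify that its image under the composite to $MSp$ is not contained in the subring generated by $w$. Once this is in hand, the remaining verifications follow routinely from the ring structure of $\pi_*(kO)_{(2)}$.
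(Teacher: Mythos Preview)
Your torsor argument for $S/\!/\eta\to kU$ has a genuine gap. The set of maps $C_\eta\to kU$ under $S^0$ is indeed a $\pi_2(kU)$-torsor, but this torsor parameterises the \emph{maps}, not elements of $\pi_2(S/\!/\eta)$; there is no ``resulting class $v$'' attached to a choice of parameter. What you presumably want is a class $\tilde v\in\pi_2(C_\eta)$ (pushed into $\pi_2(S/\!/\eta)$) and to know its image in $\pi_2(kU)$. But if $\phi,\phi'$ are two such maps differing by $\alpha\in\pi_2(kU)$, then $\phi'_*(\tilde v)-\phi_*(\tilde v)=q_*(\tilde v)\cdot\alpha$ where $q\:C_\eta\to S^2$ is the collapse map. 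Since $\eta$ is not null in $S$, the image of $q_*\:\pi_2(C_\eta)\to\pi_2(S^2)=\Z$ is $2\Z$, so varying the map changes $\phi_*(\tilde v)$ only by \emph{even} multiples of the Bott class. Thus the torsor action cannot arrange for $\tilde v$ to hit a generator; whether it does is independent of the choice of map and is exactly the nontrivial statement that the Toda bracket $\langle 2,\eta,1_{kU}\rangle$ contains a generator of $\pi_2(kU)$ (with indeterminacy $2\pi_2(kU)$). The paper invokes precisely this bracket and its naturality from $\pi_2(S/\!/\eta)$ to $\pi_2(kU)$. The same objection applies verbatim to your ``analogous analysis'' producing $w\mapsto\omega$ in $\pi_4$: one needs $\langle 8,\nu,1_{kO}\rangle\ni a$, not a torsor argument.

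For the Bott class $\beta\in\pi_8(kO)$ your proposed route through the quadratic extended power and $MSp$ is plausible in spirit but, as you yourself note, not carried out; and the paper's own Lemma~\ref{lem:nu-sigma} and Figure~\ref{fig:1} show that the $8$-cell in $E\Sigma_2\ltimes_{\Sigma_2}(C_\nu\wedge C_\nu)$ is attached by $\sigma+\widetilde{2\nu}$, so the analysis is genuinely delicate. The paper avoids this entirely by using the Toda bracket $\langle 8,\nu,a\rangle\ni b$ in $\pi_8(kO)$ and its analogue in $\pi_8(S/\!/\nu)$ (alternatively, the brackets $\langle\eta^2,\eta,2\rangle$ and $\langle\eta,\eta^2,\eta,\eta^3\rangle$). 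That is both shorter and complete.
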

\begin{proof}
For $S/\!/\eta\to kU$, it suffices to show that it induces
an epimorphism on $\pi_2(-)$. This is clear since the Toda
bracket
$\langle2,\eta,1_{S/\!/\eta}\rangle\subseteq\pi_2(S/\!/\eta)$
is defined, where $S/\!/\eta$ is viewed as an $S$-module,
and the bracket is to be taken as a function on the set
\[
\pi_0(S)\times\pi_1(S)\times\pi_1(S/\!/\eta).
\]
By naturality, this maps to the Toda bracket
$\langle2,\eta,1_{kU}\rangle\subseteq\pi_2(kU)$ which
contains a generator of $\pi_2(kU)$ and has indeterminacy
$2\pi_2(kU)$ as pointed out in~\cite{GW:Notes}*{page~64}.

For $S/\!/\nu\to kO$, it is sufficient to show that the
generators $a,b$ of the groups $\pi_4(kO),\pi_4(kO)$
come from $\pi_4(S/\!/\nu),\pi_8(S/\!/\nu)$. We can
appeal to~\cite{GW:Notes}*{page~64} (see also~\cite{AJB&JPM}*{lemma~7.3}),
where it is shown that the Toda brackets $\langle8,\nu,1_{kO}\rangle$
and $\langle8,\nu,a\rangle$ contain $a,b$ respectively.
Analogues of these brackets can be defined in $\pi_*(S/\!/\nu)$
and are preimages of the $kO$ versions. Zhouli Xu has
pointed out that the Toda brackets $\langle\eta^2,\eta,2\rangle$
and $\langle\eta,\eta^2,\eta,\eta^3\rangle$ in $\pi_*(kO)$
also contain $a,b$, furthermore they make sense in
$\pi_*(S/\!/\nu)$; this time we use the traditional
Toda brackets defined in the homotopy of a ring
spectrum.
\end{proof}

There are factorisations of these \Einfty morphisms
\[
S/\!/\eta\to T_{kU}\to kU,
\quad
S/\!/\nu\to T_{kO}\to kO,
\]
where each second factor is a characteristic
morphism. Hence these characteristic morphisms
induce epimorphisms on $\pi_*(-)$. Motivated by
these examples, we are led to make a conjecture
on a characteristic morphism for~$tmf$.

\begin{conj}\label{conj:tmf-epi}
A $2$-local characteristic morphism $T_{tmf}\to tmf$
induces an epimorphism on $\pi_*(-)$.
\end{conj}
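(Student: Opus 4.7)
The plan is to mimic Proposition~\ref{prop:kU-KO-epi}: realise each ring-theoretic generator of $\pi_*(tmf)_{(2)}$ as a Toda bracket (or iterated matric Massey product) of the form $\langle\alpha_1,\dots,\alpha_k,1_{tmf}\rangle$ with each $\alpha_i\in\pi_*(S)$, so that naturality under a characteristic morphism $j\:T_{tmf}\to tmf$ automatically furnishes a preimage. Since $\im j_*$ is a subring of $\pi_*(tmf)$, surjectivity follows once a generating set has been lifted.

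First I would fix a choice of $T_{tmf}$ and observe that, by Lemma~\ref{lem:Char-ExistUnique}(b) applied to the \Einfty morphism $S/\!/\sigma\to tmf$, there is a factorisation $S/\!/\sigma\to T_{tmf}\to tmf$; should Conjecture~\ref{conj:eta-nu-sigma} hold the first arrow is a weak equivalence, but in any case the image of $\pi_*(S/\!/\sigma)\to\pi_*(tmf)$ already lies in $\im j_*$, which accounts for $2,\eta,\nu$ and everything built from them multiplicatively. I would then enumerate a generating set for $\pi_*(tmf)_{(2)}$ as a ring: using Bauer's computations this can be taken to include $\eta,\nu$, the exotic classes $\epsilon\in\pi_8$, $\kappa\in\pi_{14}$, $\bar\kappa\in\pi_{20}$, and higher generators tied to the modular forms $c_4,c_6$ and $8\Delta$. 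For each generator I would seek a bracket presentation with sphere entries, for instance $\epsilon\in\langle\nu,\eta,\nu\rangle$, $\kappa\in\langle\epsilon,2,\eta\rangle$, and an analogue of the $\langle 8,\nu,1_{kO}\rangle$ construction for the Bott-like periodicity classes. Each such presentation lifts along $j$ up to its indeterminacy, so the induction turns on verifying that the indeterminacy at each stage is itself contained in $\im j_*$.

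The main obstacle lies at the higher periodicity generators. Unlike $\pi_*(kO)$, where a single Toda bracket detects Bott periodicity in one stroke, $\pi_*(tmf)_{(2)}$ has $192$-periodicity, $\Delta$ is not permanent, and several generators are detected only by matric Massey products whose entries and indeterminacies involve elements not a priori known to lift. The fact (Lemma~\ref{lem:eta-sigma}) that $\sigma$ persists with order $4$ in $\pi_*(S/\!/\eta)$ and is non-zero in $\pi_*(tmf)$ signals that $\sigma$-divisibility obstructions may complicate the induction. A clean proof would probably proceed by induction along the elliptic or Adams--Novikov spectral sequence for $tmf$, lifting permanent cycles degree by degree via an explicit catalogue of bracket presentations; compiling that catalogue and controlling indeterminacies is the serious work, and is what presently obstructs promoting this conjecture to a theorem.
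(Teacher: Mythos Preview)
The statement you are addressing is stated in the paper as a \emph{conjecture}, not a theorem: the paper offers no proof whatsoever, only the motivation from Proposition~\ref{prop:kU-KO-epi} and the observation that the characteristic morphisms for $kU$ and $kO$ factor through $S/\!/\eta$ and $S/\!/\nu$ and induce epimorphisms on homotopy. So there is nothing in the paper to compare your argument against.

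That said, your outline is exactly the strategy the paper's framing invites, and you have correctly diagnosed why it does not go through. The Toda bracket machinery that handled $kU$ and $kO$ in Proposition~\ref{prop:kU-KO-epi} worked because a single bracket (or a short iteration) reaches the Bott periodicity generator; for $tmf$ the $192$-periodicity, the non-permanence of $\Delta$, and the need for matric brackets with entries not yet known to lift are genuine obstructions, and you name them. Your closing sentence---that compiling a complete catalogue of bracket presentations and controlling their indeterminacies is ``what presently obstructs promoting this conjecture to a theorem''---is an accurate summary of the state of affairs, and is consistent with the paper leaving this as an open conjecture. One minor point: the factorisation $S/\!/\sigma\to T_{tmf}\to tmf$ is better justified directly from the construction of $T_{tmf}$ (since $\sigma$ lies in $\ker(\iota_{tmf})_*$ it is killed at the $7$-skeleton stage) than by invoking Lemma~\ref{lem:Char-ExistUnique}(b), which concerns morphisms between characteristics rather than morphisms from an arbitrary nuclear algebra into a characteristic.
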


\begin{bibdiv}
\begin{biblist}

\bib{TAQI}{article}{
    author={Baker, A.},
    title={Calculating with topological Andr\'e-Quillen
          theory, I: Homotopical properties of universal
          derivations and free commutative $S$-algebras},
    eprint={arXiv:1208.1868 (v8+)},
    date={2012},
}

\bib{BP-Einfinity}{article}{
    author={Baker, A.},
    title={$BP$: Close encounters of the
    \Einfty kind},
    date={2014},
    journal={J. Homotopy and Rel. Struct.},
    volume={92},
    pages={257\ndash282},
}

\bib{Nishida}{article}{
   author={Baker, A.},
   title={Power operations and coactions
   in highly commutative homology theories},
   journal={Publ. Res. Inst. Math. Sci. of
   Kyoto University},
   volume={51},
   date={2015},
   pages={237\ndash272},
}

\bib{EinftyHopfinvt1}{article}{
  author={Baker, A.},
  title={\Einfty ring spectra and elements
  of Hopf invariant~$1$},
  journal={Bol. Soc. Mat. Mex. (3)},
  date={2017},
  volume={23},
  pages={195\ndash231},
}

\bib{BGRtaq}{article}{
   author={Baker, A.},
   author={Gilmour, H.},
   author={Reinhard, P.},
   title={Topological Andr\'e-Quillen homology
   for cellular commutative $S$-algebras},
   journal={Abh. Math. Semin. Univ. Hamburg},
   volume={78},
   date={2008},
   number={1},
   pages={27\ndash50},
}

\bib{AJB&JPM}{article}{
    author={Baker, A. J.},
    author={May, J. P.},
    title={Minimal atomic complexes},
   journal={Topology},
    volume={43},
      date={2004},
    number={2},
     pages={645\ndash665},
}

\bib{AB&BR:HGamma}{article}{
   author={Baker, A.},
   author={Richter, B.},
   title={On the $\Gamma$-cohomology of rings
   of numerical polynomials and \Einfty
   structures on $K$-theory},
   journal={Comment. Math. Helv.},
   volume={80},
   date={2005},
   pages={691\ndash723},
}

\bib{AB&BR:ConnCovers}{article}{
   author={Baker, A.},
   author={Richter, B.},
   title={Uniqueness of \Einfty structures
   for connective covers},
   journal={Proc. Amer. Math. Soc.},
   volume={136},
   date={2008},
   pages={707\ndash714},
}

\bib{AB&BR:Arolla2012}{article}{
   author={Baker, A.},
   author={Richter, B.},
   title={Some properties of the Thom spectrum
   over loop suspension of complex projective
   space},
   journal={Contemp. Math.},
   volume={617},
   pages={1\ndash12},
}

\bib{MBtaq}{article}{
    author={Basterra, M.},
     title={Andr\'e-Quillen cohomology of
     commutative $S$-algebras},
   journal={J. Pure Appl. Algebra},
    volume={144},
      date={1999},
    number={2},
     pages={111\ndash143},
}

\bib{LNM1176}{book}{
   author={Bruner, R. R.},
   author={May, J. P.},
   author={McClure, J. E.},
   author={Steinberger, M.},
   title={$H_\infty $ ring spectra and their
   applications},
   series={Lect. Notes in Math.},
   volume={1176},
   date={1986},
}

\bib{JMC:Decomp}{article}{
   author={Cohen, J. M.},
   title={The decomposition of stable homotopy},
   journal={Ann. of Math. (2)},
   volume={87},
   date={1968},
   pages={305\ndash320},
}

\bib{EKMM}{book}{
    author={Elmendorf, A. D.},
    author={Kriz, I.},
    author={Mandell, M. A.},
    author={May, J. P.},
    title={Rings, modules, and algebras in
    stable homotopy theory},
    journal={Math. Surv. and Monographs},
    volume={47},
    note={With an appendix by M. Cole},
    date={1997},
}

\bib{MH:Ideals}
{article}{
    author={Hovey, M.},
    title={Smith ideals of structured ring spectra},
    eprint={arXiv:1401.2850},
    date={2014},
}

\bib{Hu-Kriz-May}{article}{
    author={Hu, P.},
    author={Kriz, I.},
    author={May, J. P.},
    title={Cores of spaces, spectra and \Einfty
    ring spectra},
    journal={Homol., Homot. and Appl.},
    volume={3},
    date={2001},
    number={2},
    pages={341\ndash54},
}

\bib{SOK:MAMS-III}{article}{
   author={Kochman, S. O.},
   title={Symplectic cobordism and the computation
   of stable stems},
   journal={Mem. Amer. Math. Soc.},
   volume={104},
   date={1993},
   number={496},
}

\bib{MS:Char}{article}{
    author={Szymik, M.},
    title={Commutative $S$-algebras of prime
    characteristics and applications to unoriented
    bordism},
    journal={Alg. \& Geom. Top.},
    volume={14},
    date={2014},
    pages={3717\ndash3743}
}

\bib{MS:ChromChar}{article}{
    author={Szymik, M.},
    title={String bordism and chromatic characteristics},
    eprint={arXiv:1211.3239},
    date={2012},
}

\bib{GW:Notes}{article}{
   author={Whitehead, G. W.},
   title={Recent advances in homotopy theory},
   journal={Conference Board of the Mathematical
   Sciences Regional Conference Series in Mathematics},
   volume={5},
   date={1970},
}

\end{biblist}
\end{bibdiv}

\end{document}